\newtheorem{theorem}{Theorem}[section]
\newtheorem{prob}{Problem}
\newenvironment{proof}           {\noindent{\bf Proof.} }%
                                {\null\hfill$\Box$\par\medskip}
\newcommand{\lnk}{\text{link}}
\newcommand{\del}{\text{del}}
\newcommand{\Rel}[1]{\mbox{\mbox{Rel}$(#1,q)$}}
\newcommand{\CRel}[1]{\mbox{\mbox{CSRel}$(#1,p)$}}
\newcommand{\reff}[1]{(\ref{#1})}
\newcommand{\M}{\mathcal{M}}
\newcommand{\mgen}[1]{\mbox{\mbox{mgen}$_\mathcal{M}(#1)$}}
\begin{document}

\title{On the Reliability Roots of Simplicial Complexes and Matroids}

%% use optional labels to link authors explicitly to addresses:
%% \author[label1,label2]{}
%% \address[label1]{}
%% \address[label2]{}

\author{J.I. Brown\footnote{Corresponding author. {\tt jason.brown@dal.ca}} and C.D.C. DeGagn\'e\\
Department of Mathematics and Statistics, Dalhousie University,\\ Halifax, Canada B3H 4R2}

\date{}

\maketitle

\begin{abstract}
Assume that the vertices of a graph $G$ are always operational, but the edges of $G$ fail independently with probability $q \in[0,1]$. The \emph{all-terminal reliability} of $G$  is the probability that the resulting subgraph is connected. The all-terminal reliability can be formulated into a polynomial in $q$, and it was conjectured \cite{BC1} that all the roots of (nonzero) reliability polynomials fall inside the closed unit disk.  It has since been shown that there exist some connected graphs which have their reliability roots outside the closed unit disk, but these examples seem to be few and far between, and the roots are only barely outside the disk.
In this paper we generalize the notion of reliability to simplicial complexes and matroids and investigate when, for small simplicial complexes and matroids, the roots fall inside the closed unit disk.
\end{abstract}

%%%%%%%%%%%%%%%%%%%%%%%%%%%%%%%%%%%%%%%%%%%%%%%%%%%%%%%%%%%%%%%%%%%%%%%%%%%%%%%%%%%%%%%%%%%%%%%%%%%%%%%%%%%%%%%%%%%%%%%%%%%%%
\section{Introduction}

A well known model of network robustness is the \emph{(all terminal) reliability} of a graph, in which the vertices are always operational, but each edge fails independently with probability $q \in [0,1]$. The reliability of the undirected, connected graph $G$, $\Rel{G}$, is the probability that the operational edges form a connected spanning subgraph, that is, that the operational edges contain a spanning tree. It is easy to see that the reliability of a graph $G$ is always a polynomial in $q$ (and in $p = 1-q$) and is not identically $0$ if and only if $G$ is connected.  As such, one can express the reliability of a graph $G$ of order $n$ and size $m$ (that is, with $n$ vertices and $m$ edges) in a variety of useful forms, by expanding the polynomial in terms of different bases (see, for example, \cite{C1}):
	
\begin{eqnarray}
\Rel{G} & = & \sum_{i = 0}^{m-n+1} F_iq^i(1-q)^{m-i}~~~~~(\mbox{F-Form}) \label{Fform}\\
             & = & (1-q)^{n-1}\sum_{i=0}^{m-n+1}H_iq^i~~~~(\mbox{H-Form}).  \label{Hform}
\end{eqnarray}

The interpretation of the coefficients of the F-form are quite straightforward. A connected spanning subgraph with edge set $E^\prime \subseteq E$ arises with probability $q^{|E \setminus E^\prime|}(1 - q)^{|E^\prime|}$.  Therefore, $F_i$ is the number of sets of $i$ edges whose removal leaves the graph connected. The interpretation of the coefficients of the H-form are more subtle, but we shall to return to them shortly.
	
Most of the early work on reliability concerned algorithmic calculation (see \cite{C1}), and then, when the problem was found to be intractable, emphasis was placed on efficient methods of bounding the function.  However, a natural mathematical question is to ask where the roots of the reliability polynomial lie in the complex plane (the location of roots of polynomials have direct implications for the relationship between the coefficients -- for example, Newton's well known theorem \cite{comtet} shows that a polynomial with positive coefficients having all real roots implies that the coefficients are unimodal).  In 1992 it was conjectured  \cite{BC1} that all the roots of $\Rel{G}$ of a connected graph $G$ (as we shall assume from now on about a graph $G$) lie in the unit disk (closed and centered at the origin, as we shall always assume when talking about the unit disk). See Figure~\ref{relrootsorder7} for a plot of reliability roots of small simple graphs. In support of the conjecture, it was shown in \cite{BC1} that 
\begin{itemize}
\item the real reliability roots of graphs are always in $[-1,0) \cup \{1\}$ (and hence in the unit disk),
\item every graph had a subdivision for which the roots lay in the unit disk, and
\item the closure of the (complex) reliability roots contains the unit disk.
\end{itemize} 
For many years the conjecture was believed to be true, and proved for some classes of graphs such as series-parallel graphs \cite{W1}.  However, in 2004 Royle and Sokal \cite{RS1} constructed graphs whose reliability polynomials $\Rel{G}$ had a root outside of the closed unit disk.  The smallest such example started with the complete graph $K_4$ and then replaced one pair of opposite edges by a bundle of six edges (see Figure~\ref{fig:RSGraph}). This graph has a root of its reliability polynomial whose modulus is approximately $1.0017$, just barely outside of the disk.  There have been subsequent attempts to find roots with larger modulus, with \cite{BM1} pushing the roots out as far from the origin as $1.113$, but still the examples seem few and far between, and still bounded. 
%Are the roots of reliability polynomials bounded? In spite of a fair bit of work on the question, it remains open.

\begin{figure}[ht]
\centering
\includegraphics[width=3.0in]{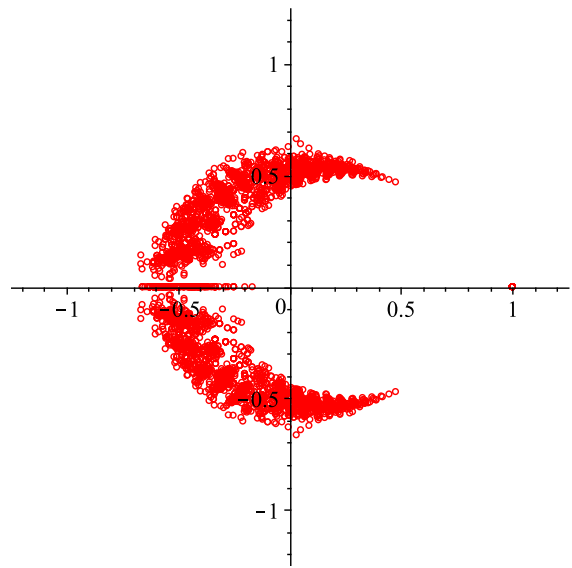}
\caption{The roots of reliability polynomials of all simple graphs of order $7$}.
\label{relrootsorder7}
\end{figure}

	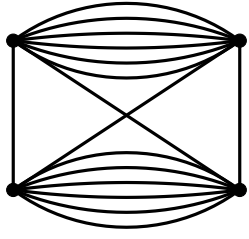
\begin{figure}[ht]
		\begin{center}
			\begin{tikzpicture}[line cap=round,line join=round,>=triangle 45,x=1.003719132544759cm,y=0.99301850213383cm]
\clip(-2.474580852224948,1.0377059052999644) rectangle (2.506892388524535,5.065828232733614);
\draw [shift={(0.,-7.2)},line width=1.2pt]  plot[domain=1.4376599992432495:1.7039326543465438,variable=\t]({1.*11.3*cos(\t r)+0.*11.3*sin(\t r)},{0.*11.3*cos(\t r)+1.*11.3*sin(\t r)});
\draw [shift={(0.,0.4)},line width=1.2pt]  plot[domain=1.1760052070951352:1.965587446494658,variable=\t]({1.*3.9*cos(\t r)+0.*3.9*sin(\t r)},{0.*3.9*cos(\t r)+1.*3.9*sin(\t r)});
\draw [shift={(0.,2.)},line width=1.2pt]  plot[domain=0.9272952180016122:2.214297435588181,variable=\t]({1.*2.5*cos(\t r)+0.*2.5*sin(\t r)},{0.*2.5*cos(\t r)+1.*2.5*sin(\t r)});
\draw [shift={(0.,15.346237600463352)},line width=1.2pt]  plot[domain=4.580948777615711:4.843829183153669,variable=\t]({1.*11.444959925057333*cos(\t r)+0.*11.444959925057333*sin(\t r)},{0.*11.444959925057333*cos(\t r)+1.*11.444959925057333*sin(\t r)});
\draw [shift={(0.,7.6)},line width=1.2pt]  plot[domain=4.317597860684928:5.107180100084451,variable=\t]({1.*3.9*cos(\t r)+0.*3.9*sin(\t r)},{0.*3.9*cos(\t r)+1.*3.9*sin(\t r)});
\draw [shift={(0.,6.)},line width=1.2pt]  plot[domain=4.068887871591405:5.355890089177974,variable=\t]({1.*2.5*cos(\t r)+0.*2.5*sin(\t r)},{0.*2.5*cos(\t r)+1.*2.5*sin(\t r)});
\draw [shift={(0.,-9.2)},line width=1.2pt]  plot[domain=1.4376599992432488:1.7039326543465445,variable=\t]({1.*11.3*cos(\t r)+0.*11.3*sin(\t r)},{0.*11.3*cos(\t r)+1.*11.3*sin(\t r)});
\draw [shift={(0.,-1.6)},line width=1.2pt]  plot[domain=1.1760052070951352:1.965587446494658,variable=\t]({1.*3.9*cos(\t r)+0.*3.9*sin(\t r)},{0.*3.9*cos(\t r)+1.*3.9*sin(\t r)});
\draw [shift={(0.,0.)},line width=1.2pt]  plot[domain=0.9272952180016122:2.214297435588181,variable=\t]({1.*2.5*cos(\t r)+0.*2.5*sin(\t r)},{0.*2.5*cos(\t r)+1.*2.5*sin(\t r)});
\draw [shift={(0.,13.2)},line width=1.2pt]  plot[domain=4.579252652833041:4.845525307936337,variable=\t]({1.*11.3*cos(\t r)+0.*11.3*sin(\t r)},{0.*11.3*cos(\t r)+1.*11.3*sin(\t r)});
\draw [shift={(0.,5.6)},line width=1.2pt]  plot[domain=4.317597860684928:5.107180100084451,variable=\t]({1.*3.9*cos(\t r)+0.*3.9*sin(\t r)},{0.*3.9*cos(\t r)+1.*3.9*sin(\t r)});
\draw [shift={(0.,4.)},line width=1.2pt]  plot[domain=4.068887871591405:5.355890089177974,variable=\t]({1.*2.5*cos(\t r)+0.*2.5*sin(\t r)},{0.*2.5*cos(\t r)+1.*2.5*sin(\t r)});
\draw [line width=1.2pt] (-1.5,4.)-- (-1.5,2.);
\draw [line width=1.2pt] (1.5,4.)-- (1.5,2.);
\draw [line width=1.2pt] (1.5,4.)-- (-1.5,2.);
\draw [line width=1.2pt] (-1.5,4.)-- (1.5,2.);
\begin{scriptsize}
\draw [fill=black] (-1.5,4.) circle (2.5pt);
\draw [fill=black] (1.5,4.) circle (2.5pt);
\draw [fill=black] (-1.5,2.) circle (2.5pt);
\draw [fill=black] (1.5,2.) circle (2.5pt);
\end{scriptsize}
\end{tikzpicture}
\caption{Royle - Sokal Graph}
\label{fig:RSGraph}	
\end{center}
\end{figure}

\section{Generalizing Reliability to Complexes}

In order to better explore reliability roots in relation to the unit disk, we extend the notion of reliability to more abstract structures (\cite{W1} extends to matroids, and \cite{browncolsetsystem} to more general systems). We start not with a graph but with a  \emph{(simplicial) complex} $\mathcal{C}$ of order $m$, which is a collection of subsets (called \emph{faces}) of a nonempty finite set $X$ of cardinality $m$ that is closed under containment, that is, if $\sigma \in \mathcal{C}$ and $\alpha \subseteq \sigma$, then $\alpha \in \mathcal{C}$ (we refer the reader to \cite{brownbook} or \cite{C1} for a general combinatorial introduction to complexes). As an example, for any set $X$, the power set of $X$, denoted $\overline{X}$, is a \emph{simplex}, and always a complex. 

We shall need a number of definitions concerning complexes before we return to reliability. Given two complexes $\mathcal{C}_{X}$ and $\mathcal{C}_{Y}$ on disjoint sets $X$ and $Y$, respectively, define the \emph{sum} $\mathcal{C}_{X} \oplus \mathcal{C}_{Y}$ to be the complex on $X \cup Y$ whose faces are those of $\mathcal{C}_{X}$ and $\mathcal{C}_{Y}$ (if $X$ and $Y$ are not disjoint, we take isomorphic disjoint copies to form $\mathcal{C}_{X} \oplus \mathcal{C}_{Y}$). The complex $\mathcal{C}$ is said to be \emph{connected} if it cannot be written as the sum of two other complexes each of positive order, and if $\mathcal{C} = \mathcal{C}_{1} \oplus \mathcal{C}_{2} \cdots \oplus \mathcal{C}_{k}$ where each $\mathcal{C}_{i}$ is connected, then $\mathcal{C}_{1},\ldots,\mathcal{C}_{k}$ are the \emph{components} of $\mathcal{C}$. For a given vertex $x$ of the underlying set, the \emph{deletion} and \emph{link} subcomplexes are those on $X \backslash \{x\}$, with faces
\[ \mbox{del}_{x}\mathcal{C} = \{ \sigma : \sigma \in \mathcal{C},~ x \not\in \sigma\}\]
and
\[ \mbox{link}_{x}\mathcal{C} = \{ \sigma \backslash \{x\} : x \in \sigma \in \mathcal{C} \}. \]

The \emph{$k$-skeleton} of $\mathcal{C}$ is the collection of faces of $\mathcal{C}$ of cardinality $k$. The maximal faces (under containment) are called \emph{facets} or \emph{bases}, and the size of a largest facet is called the \emph{dimension} or \emph{rank} of the complex, and often denoted by $d$ or $r$ (if the complex is empty, that is, has no faces, then we define its dimension to be $-\infty$). A complex is \emph{purely $d$-dimensional} if all of its facets have cardinality $d$.

The \emph{F-vector} of the the complex $\mathcal{C}$ is the sequence $\langle F_{0},F_{1},\ldots,F_{d} \rangle$, where $F_{i}$ is the number of faces of cardinality $i$ in the complex. The \emph{F-polynomial} is the generating function of the $F$-vector, and is given by
\[ f_{\mathcal{C}}(x) = \sum_{i=1}^{d} F_{i}x^{i}\]
(the degree of the polynomial is clearly the dimension of the complex).
The \emph{H-polynomial} of complex $\mathcal{C}$ of dimension $d$ $\mathcal{C}$ is given by 
\[ h_{\mathcal{C}}(x) = (1-x)^{d} f_{\mathcal{C}}\left(\frac{x}{1-x}\right),\]
and the \emph{H-vector} of the complex is the vector of coefficients $\langle H_{0},H_{1},\ldots,H_{d}\rangle$ of the H-polynomial (starting at the constant term); alternatively,
\[ H_{k} = \sum_{i=0}^{k} F_{i}(-1)^{k-i}{{d-i} \choose {k-i}}.\]
It also follows that
\begin{eqnarray}
F_{d} & = & \sum_{i=0}^{d} H_{i}.\label{Fd}
\end{eqnarray}
For a complex $\mathcal{C}$ of dimension $d$ on a set of cardinality $m$,
\begin{eqnarray*}
\Rel{\mathcal{C}} & = & (1-q)^m f_{\mathcal{C}}(q/(1-q))\\
                            & = & (1-q)^{m-d}(1-q)^{d} f_{\mathcal{C}}(q/(1-q))\\
                            & = & (1-q)^{m-d} h_{\mathcal{C}}(q)
\end{eqnarray*}
Thus the roots of $\Rel{\mathcal{C}}$ and $h_{\mathcal{C}}(q)$ coincide, except for some roots at $q = 1$.

\vspace{0.25in}
We are now ready to extend the notion of reliability to complexes.  Given a complex $\mathcal{C}$ on finite set $X$ of cardinality $m$, define the \emph{reliability polynomial} of $\mathcal{C}$ as
\[ \Rel{\mathcal{C}} = \sum_{F \in \mathcal{C}} q^{|F|}(1-q)^{|X\backslash F|}.\]
The idea is that we choose each element of $\mathcal{C}$ independently with probability $q$ and ask the probability that the chosen vertices form a face of the complex. Grouping the terms by their exponent of $q$, we can express the reliability in terms of the F-vector (and F-polynomial) of the complex: 
\[ \Rel{\mathcal{C}} = \sum_{i=0}^{d} F_{i}q^{i}(1-q)^{m-i} = (1-q)^{m}f_{\mathcal{C}}\left( \frac{q}{1-q}\right).\]
It is straightforward to verify that 
\begin{itemize}
\item $\Rel{\mathcal{C}} \not\equiv 0$ if and only if $\mathcal{C}$ has a face (i.e.\ $\mathcal{C}$ is finite dimensional), 
\item $\Rel{\mathcal{C}_{1} \oplus \mathcal{C}_{2}} = \Rel{\mathcal{C}_{1}} \cdot \Rel{\mathcal{C}_{1}}$, and
\item for any element $x \in X$,
\[ \Rel{\mathcal{C}} = (1-q)\cdot \Rel{\mbox{del}_{x}\mathcal{C}} + q\cdot \Rel{\mbox{link}_{x}\mathcal{C}}.\]
\end{itemize}
It follows that if $\mathcal{C}$ has a \emph{loop} $x$ (an element of $X$ that belong to no face) then the removal of $x$ from the underlying set corresponds to division of  $\mathcal{C}$ by $1-q$, and moreover, if $\mathcal{C}$ has a \emph{coloop} $x$ (that is, a vertex in every facet of $\mathcal{C}$), then 
$\mbox{del}_{x}\mathcal{C} = \mbox{link}_{x}\mathcal{C}$, and hence  $\mathcal{C}$ and $\mbox{link}_{x}\mathcal{C}$ have the same reliability. Hence we will often assume that the complex under question has no loops or coloops.\footnote{We remark that in the literature sometimes reliability of complexes has been studied in the guise of \emph{coherent systems} (see, for example, \cite{moore}), which are collections of subsets of a finite set $X$ closed under \emph{superset}. For a coherent system $\mathcal{S}$ on set $X$, its reliability is 
\[ \CRel{\mathcal{S}} = \sum_{S \in \mathcal{S}} p^{|S|}(1-p)^{|X \backslash S|},\]
which is the same as $\Rel{\mathcal{S}}$, where $q = 1-p$ and $\mathcal{S}$ is the complex with faces $X \backslash \sigma$ for $\sigma \in \mathcal{S}$.}

How does this notion of reliability of complexes relate to all-terminal reliability? The set of subsets of the edge set of a connected graph $G$ whose removal leaves the graph connected is indeed a complex, a well-studied one, called the \emph{cographic matroid} of $G$ (a \emph{matroid} is a nonempty complex ${\mathcal M}$ with the \emph{exchange axiom}: if $\sigma,~\alpha \in {\mathcal M}$ and $|\sigma | < |\alpha | $, then there is a $z \in \alpha \backslash \sigma$ with $\{z\} \cup \sigma \in {\mathcal M}$). In fact, the sequence $\langle F_{0},F_{1},\ldots,F_{m-n+1} \rangle$ from the F-form (\ref{Fform}) of the reliability polynomial is in fact the F-vector of the cographic matroid. 
Thus the reliability of the cographic matroid of a graph $G$ is precisely the (all-terminal) reliability of $G$.
%The sequence $\langle H_{0},H_{1},\ldots,H_{m-n+1} \rangle$ of the coefficients of the H-form (\ref{Hform}) of the reliability polynomial turn out to be the associated \emph{H-vector} of the complex.

\vspace{0.25in}
Let's consider \emph{reliability roots}, that is, the roots of reliability polynomials of nonempty complexes. For a complex of dimension $0$, the reliability is $1$, which obviously has no roots outside the unit disk. For dimension $1$, the reliability is of the form $(1-q)+mq = 1+(m-1)q$, which has all of its roots in the unit disk (and real). The situation changes dramatically when the dimension grows larger.
For example, let $m \geq 2$ and consider the complex $\mathcal{P}_{m}$ on set $X = \{1,2,\ldots,m\}$ with faces 
\[ \emptyset,\{1\},\{2\},\ldots,\{n\},\{1,2\},\{3,4\},\{4,5\},\ldots,\{m-1,m\}.\] 
Clearly the reliability of $\mathcal{P}_{n}$ is 
\[ (1-q)^{2} + mq(1-q) + (m-2)q^{2} = -q^2+(m-2)q+1,\]
which has a root at 
\[ \frac{m-2 + \sqrt{(m-2)^{2}+4}}{2} \sim m-2,\]
which grows arbitrarily large (and positive).
Thus we should insist on some properties (even beyond pureness, as the complex above is pure), shared by cographic matroids, of our complexes if we hope to have roots always in (or close to) the unit disk.

\subsection{Restricting to Shellable Complexes}

A nonempty purely $d$-dimensional complex is \emph{shellable} if its facets can be ordered as $\sigma_{1},\sigma_{2},\ldots,\sigma_{t}$ such that for $j = 2,3,\ldots,t$, the subcomplex
\[ \overline{\sigma_{j}} \cap \bigcup_{i<j} \overline{\sigma_{i}}\]
is a purely $(d-1)$- complex. To illustrate, it is an easy exercise to see that a purely $2$-dimensional complex (i.e. a graph without isolated vertices) is shellable if and only if it is connected as a graph. 
Shellable complexes arise in a variety of combinatorial and topological settings, and have some very nice properties (see, for example, \cite{C1})). 
For a shellable complex, the H-vector is known to consist of nonnegative integers, with a variety of interpretations:
\begin{itemize}
\item An \emph{interval partition} is a collection of disjoint intervals $[L,U] = \{S : L \subseteq S \subseteq U\}$ such that every face in the complex belongs to precisely one interval.  Simplicial complexes that have a partitioning $[L_i, U_i], 1 \leq i \leq b$ with $U_i$ a facet for each $i$ are called \emph{partitionable}.  It is known that all shellable complexes are partitionable \cite[pg. 63 - 64] {C1}, and moreover, $H_{i}$ is the number of lower sets $L_j$ of cardinality $i$.
\item An \emph{order ideal of monomials} $\mathcal{N}$ is a set of monomials closed under division. For every shellable complex, there is an order ideal of monomials $\mathcal{N}$ such that $H_{i}$ counts the number of monomials of degree $i$ in the set.
\end{itemize}
It follows that the sequence $H_{0},H_{1},\ldots, H_{d}$ consists of nonnegative integers with no internal zeros (and in fact, if $\mathcal{C}$ has no coloops, then $H_{d}$ is nonzero).

Many complexes that arise in combinatorial and other settings turn out to be shellable (in particular, matroids are always shellable \cite{C1}). One might hope that the extra condition of shellability on a complex might force the reliability roots inside the unit disk, but such is not the case, even in dimension $2$. In fact, the following is true:

\begin{theorem}
\label{dim2complex}
A purely $2$-dimensional complex $\mathcal{C}$ with $F$-vector $\langle1,m,F_2\rangle$ has a reliability root outside the unit disk if and only if $F_2 \in [\frac{m}{2} ,m -2] \cup [m,2m-5]$.
If, moreover, $\mathcal{C}$ is shellable, then it has a reliability root outside the unit disk if and only if $F_{2} \in [m,2m-5]$.
\end{theorem}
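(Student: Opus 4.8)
The plan is to reduce everything to the quadratic factor of $\Rel{\mathcal{C}}$ and then do a root analysis. Because a simplicial complex has distinct faces, a purely $2$-dimensional complex with $F$-vector $\langle 1,m,F_2\rangle$ is exactly the face complex of a simple graph $G$ on $m$ vertices with $F_2$ edges and no isolated vertex; hence $\lceil m/2\rceil\le F_2\le\binom{m}{2}$, and by the characterization recalled above $\mathcal{C}$ is shellable precisely when $G$ is connected, i.e.\ when $F_2\ge m-1$. Expanding,
\[ \Rel{\mathcal{C}}=(1-q)^m+mq(1-q)^{m-1}+F_2q^2(1-q)^{m-2}=(1-q)^{m-2}\,g(q),\qquad g(q)=(F_2-m+1)q^2+(m-2)q+1, \]
and since the roots contributed by $(1-q)^{m-2}$ lie on the unit circle, $\Rel{\mathcal{C}}$ has a root outside the closed unit disk iff $g$ does; note $g(0)=1\ne 0$.

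I would first handle $F_2=m-1$ separately: then $g(q)=(m-2)q+1$ (or $g\equiv 1$ if $m=2$), with single root $-1/(m-2)\in[-1,0)$, so no root lies outside the disk. For $F_2\ne m-1$, $g$ is a genuine quadratic, and the main device is the substitution $q=1/(u+1)$, equivalently $u=(1-q)/q$: it is a bijection of $\mathbb{C}\setminus\{0\}$ onto $\mathbb{C}\setminus\{-1\}$ sending the two roots of $g$ to the two roots $u_\pm=\tfrac12\bigl(-m\pm\sqrt{m^2-4F_2}\bigr)$ of $u^2+mu+F_2$, and under it $|q|>1$ is equivalent to $|u+1|=1/|q|<1$. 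Thus $\Rel{\mathcal{C}}$ has a root outside the closed unit disk iff at least one $u_\pm$ lies in the open disk of radius $1$ about $-1$.

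The rest is a short case split on $\Delta=m^2-4F_2$. If $\Delta<0$ the roots $u_\pm$ are complex conjugates and a one-line computation gives $|u_\pm+1|^2=F_2-m+1$; moreover $\Delta<0$ forces $F_2-m+1>0$ (otherwise $\Delta\ge(m-2)^2+4>0$), so in fact $F_2-m+1\ge 1$ and neither root lies in the open disk about $-1$. If $\Delta\ge 0$ the roots are real and both strictly negative (sum $-m<0$, product $F_2>0$), so a root can lie in $(-2,0)$ only if the larger root $u_+$ does, and $u_+>-2\iff\sqrt{m^2-4F_2}>m-4$; for $m\ge 4$ both sides are nonnegative and this squares to $F_2<2m-4$, while the finitely many cases $m\le 3$ are vacuous once the realizability bound is imposed. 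Since $F_2\le 2m-5$ already implies $F_2\le m^2/4$, i.e.\ $\Delta\ge 0$, we conclude: a complex with $F$-vector $\langle 1,m,F_2\rangle$ has a reliability root outside the closed unit disk iff $F_2\le 2m-5$ and $F_2\ne m-1$. Intersecting with $\lceil m/2\rceil\le F_2\le\binom{m}{2}$ gives $F_2\in[m/2,m-2]\cup[m,2m-5]$ (after checking every integer in these ranges is realized by a simple graph with no isolated vertex), and further intersecting with the shellability constraint $F_2\ge m-1$ deletes the interval $[m/2,m-2]$ and leaves $F_2\in[m,2m-5]$ (realized by a spanning tree plus extra edges).

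I expect the only real friction to be the boundary bookkeeping: one must check that the right endpoint is $2m-5$ and not $2m-4$ (the value $F_2=2m-4$ places a root exactly at $q=-1$, on the circle), that the degenerate linear case $F_2=m-1$ is correctly excluded, and that the realizability constraints — $\lceil m/2\rceil\le F_2$ in general, and $F_2\ge m-1$ in the shellable case, which is precisely what removes $[m/2,m-2]$ — are carried through correctly, including the small-$m$ cases where the claimed intervals are empty.
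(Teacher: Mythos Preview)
Your argument is correct. Both you and the paper reduce to the quadratic factor $g(q)=(F_2-m+1)q^2+(m-2)q+1$, dispose of the linear case $F_2=m-1$, use the lower bound $F_2\ge\lceil m/2\rceil$ coming from purity, and identify shellability with connectedness of the underlying graph. The divergence is in how the root location for $g$ is decided. The paper normalizes to a monic quadratic and invokes the Hurwitz (Schur--Cohn) criterion, that $z^2+bz+c$ has both roots in the closed unit disk iff $|c|\le 1$ and $|b|\le c+1$; the two inequalities are then checked in the ranges $F_2<m-1$ and $F_2\ge m$. You instead apply the M\"obius substitution $q=1/(u+1)$, turning the question into whether $u^2+mu+F_2$ has a root in the open disk $|u+1|<1$, and settle this by a direct discriminant split: when $\Delta<0$ the identity $|u_\pm+1|^2=F_2-m+1\ge 1$ rules out such roots, and when $\Delta\ge 0$ the larger real root satisfies $u_+>-2$ precisely when $F_2\le 2m-5$. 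Your route is self-contained (no external root criterion is cited) and the transformed quadratic $u^2+mu+F_2$ has pleasantly clean coefficients; the paper's route is shorter once the Hurwitz criterion is in hand and avoids the real/complex case split. Both arrive at the same characterization $F_2\in[m/2,m-2]\cup[m,2m-5]$, with the shellable case obtained by intersecting with $F_2\ge m-1$.
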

\begin{proof}
Assume to begin with that $\mathcal{C}$ is a purely $2$-dimensional complex; its reliability is given by
\begin{eqnarray*} 
\Rel{\mathcal{C}} & = & F_{2}\cdot q^{2}(1-q)^{m-2} + m\cdot q(1-q)^{m-1} + (1-q)^{m}\\ 
                             & = & (1-q)^{m-2} \left( (F_2 - m + 1) q^{2} + (m-2)q +1\right)
\end{eqnarray*}  
%                             & = & F_2 \cdot (1-q)^{m-2} \left( q^{2} + \frac{m-2}{F_2 - m + 1}q +\frac{1}{F_2 - m + 1}\right)

so it suffices to consider the roots of 
\[ r(q)  =  (F_2 - m + 1) q^{2} + (m-2)q +1.\]                     
As the removal of loops cannot introduce any roots (except possibly for $q = 1$), we can assume that  $\mathcal{C}$ has no loops. The complex has no isolated vertices (that is, maximal faces of cardinality $1$), as otherwise the complex would not be pure.  Therefore, we have that $F_2 \geq m/2$. We split the argument up into two cases, depending on whether  $F_{2}$ is less than $m$ or not. 

Note that if $F_2 = m-1$ then $r(q)  = (m-2)q +1$ which has all of its roots in the disk, so we can assume that $F_2 \neq m-1$, and hence we can focus on the roots of 
\[ r_{1}(q)  =  q^{2} + \frac{m-2}{F_2 - m + 1}q +\frac{1}{F_2 - m + 1}.\]  
The Hurwitz Criterion (see \cite{S1}), states that a real polynomial $z^{2}+bz+c$ has all of its roots in the unit disk if and only if $|c| \leq 1$ and $|b| \leq c+1$. 
Thus we set 
\[ b = \frac{m-2}{F_{2}-m+1} \text{ and } c = \frac{1}{F_{2}-m+1}.\]
Clearly the first condition, $|c| \leq 1$, holds as $F_{2}$ is an integer different from $m-1$. So everything hinges on whether
\begin{eqnarray} 
|b| & = & \left| \frac{m-2}{F_{2}-m+1}\right| \leq \frac{1}{F_{2}-m+1} + 1  = c+1. \label{hurwitzineq}
\end{eqnarray}

First consider the case that $F_2 \leq m-1$; as $F_2 \neq m-1$, we have that $F_2 < m - 1$.  Then (\ref{hurwitzineq}) is equivalent to $F_2 \leq 0$, which never holds as dimension $2$ implies $F_2 \geq 1$.  Therefore, there is always a reliability root outside the closed unit disk when $F_2 < m-1$.

Now assume that $F_2 \geq m$.
Then a simple calculation shows that (\ref{hurwitzineq}) holds if and only if $F_{2} \geq 2m-4$. Thus in this case, $\Rel{\mathcal{C}}$ has a root outside the unit disk if and only if $m \leq F_{2} \leq 2m-5$.

Thus we conclude that $\mathcal{C}$ has a reliability root outside the unit disk if and only if $F_2 \in [\frac{m}{2} ,m -2] \cup [m,2m-5]$. A $2$-dimensional complex is shellabe if and only if it is connected as a graph, so shellability implies that $F_2 \geq m-1$, and hence if $\mathcal{C}$ is shellable, then it has a reliability root outside the unit disk if and only if $F_{2} \in [m,2m-5]$.
\end{proof}

Furthermore, when $F_{2} = m \geq 2$, (corresponding to a $2$-dimensional shellable complex whose facets form a unicyclic connected graph), the reliability is
\[ (1-q)^{2} + mq(1-q) + mq^{2} = q^2+(m-2)q+1,\]
which has a root at 
\[ -\frac{m}{2} +1 -\frac{\sqrt{m^2-4}}{2},\] and this root can grow arbitrarily large in absolute value. 

\vspace{0.25in}
Indeed, for other shellable complexes, the roots can be more than unbounded. 

\begin{theorem}
\label{brokencircuitthm}
The reliability roots of shellable complexes are dense in the complex plane.
\end{theorem}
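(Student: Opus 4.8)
The plan is to pin down a family of shellable complexes whose reliability roots are, after a fixed Möbius change of variable, exactly the chromatic roots of graphs, and then to quote Sokal's theorem that those are dense in $\mathbb{C}$. The vehicle is the \emph{broken circuit complex}. Fix a loopless graph $G$ with $n$ vertices, $c$ components, $m$ edges, and rank $r = n - c$, together with a linear order on $E(G)$, and let $BC(G)$ be the complex of all edge sets that contain no broken circuit. Two classical, citable facts will be used: $BC(G)$ is purely $r$-dimensional, and it is shellable (Bj\"orner). By Whitney's broken circuit theorem its $F$-vector $\langle F_0, \dots, F_r\rangle$ is the sequence of unsigned coefficients of the chromatic polynomial, so that
\[ P(G,k) \;=\; \sum_{j=0}^{r} (-1)^{j} F_{j}\, k^{\,n-j} \;=\; k^{\,n}\, f_{BC(G)}(-1/k), \]
where $f_{BC(G)}(x) = \sum_{j} F_{j} x^{j}$ is the $F$-polynomial of $BC(G)$.

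Next comes the substitution $k = (q-1)/q$, under which $q/(1-q)$ becomes $-1/k$. Combining this with $\Rel{BC(G)} = (1-q)^{m} f_{BC(G)}\!\left(q/(1-q)\right)$, a one-line manipulation gives
\[ \Rel{BC(G)} \;=\; (-1)^{n}\, q^{\,n}\, (1-q)^{\,m-n}\; P\!\left(G, \tfrac{q-1}{q}\right). \]
I would restrict attention to $2$-connected graphs $G$, which suffice for the density statement and automatically satisfy $m \ge n$, so that the right-hand side is a genuine polynomial in $q$. It then follows at once that every chromatic root $k_{0} \neq 1$ of $G$ produces a reliability root $q_{0} = 1/(1-k_{0})$ of the shellable complex $BC(G)$ (the value $k_{0} = 1$ is the only one that has to be discarded, corresponding to $q_{0} = \infty$).

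To conclude, I would invoke Sokal's theorem that the chromatic roots of finite graphs are dense in $\mathbb{C}$ — a density already realized within the $2$-connected graphs. Removing the single value $k = 1$ leaves a set still dense in $\mathbb{C}$, hence dense in the Riemann sphere $\hat{\mathbb{C}}$; the Möbius map $\mu(k) = 1/(1-k)$ is a homeomorphism of $\hat{\mathbb{C}}$ with $\mu(1) = \infty$, so its image of this set is dense in $\hat{\mathbb{C}}$ and contained in $\mathbb{C}$, hence dense in $\mathbb{C}$. By the previous paragraph this dense subset of $\mathbb{C}$ consists of reliability roots of broken circuit complexes, and a fortiori of shellable complexes, which is the assertion of the theorem.

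The only substantive external input is Sokal's density theorem; everything on our side is bookkeeping, and the one point needing genuine care in a full write-up is the derivation of the displayed identity together with the observation that only $k_{0} = 1$ is lost under the change of variable. (As a reality check, the density obtained is purely topological: the $h$-vector of a shellable complex is nonnegative with $H_{0} = 1$, so $h_{\mathcal{C}}(q) > 0$ for real $q > 0$, and consequently no reliability root of a shellable complex is a positive real other than $1$; the dense set we produce simply crowds arbitrarily close to that ray without meeting it.) If one prefers to avoid citing Sokal's theorem outright, the same broken-circuit-complex reduction can be fed to a direct Beraha--Kahane--Weiss analysis of the two-term reliability polynomials of the generalized theta graphs, reproving density from scratch; routing through the chromatic-roots result is, however, much shorter.
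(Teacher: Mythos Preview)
Your proposal is correct and follows essentially the same route as the paper: both arguments use broken circuit complexes as the shellable family, relate their reliability polynomial to the chromatic polynomial via the M\"obius substitution $k = (q-1)/q$, and then invoke Sokal's theorem that chromatic roots are dense in $\mathbb{C}$. Your write-up is somewhat more careful about the exceptional value $k=1$, the restriction to $2$-connected graphs (where Sokal's generalized theta graphs already live), and the Riemann-sphere bookkeeping, but the underlying idea is identical.
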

\begin{proof}
The broken circuit complex of a graph $G$ (see \cite{biggs,bjorner}, for example) is formed by fixing a linear order $\preceq$ on the edges of $G$ and declaring any circuit minus its $\preceq$-least edge to be a \emph{broken circuit}.  The broken circuit complex $\mathcal{BR}(G,\preceq)$ is the complex on the edge set of $G$ whose faces are those subsets that do not contain a broken circuit. It is known that every broken circuit complex is shellable \cite{provan}, and that the dimension of the complex, for a graph with $n$ vertices and $c$ components, is $n-c$. 

The interest in broken circuit complexes arises from the surprising fact that if $G$ is a graph of order $n$ and $\langle a_{0},a_{1},\ldots,a_{n-1}\rangle$ is the F-vector of $\mathcal{BR}(G,\preceq)$, then the well-known \emph{chromatic polynomial} of $G$, $\pi(G,x)$, can be expressed as
\[ \pi(G,x) = \sum_{i=0}^{n-c} (-1)^{i}a_{i}x^{n-i}.\]
Sokal \cite{sokalChromPoly} has proven that the roots of chromatic polynomials are dense in the complex plane. Now
\begin{eqnarray*} 
(-q)^n \pi \left( G, \frac{q-1}{q} \right) & = & (-q)^n  \sum_{i=0}^{n-c} (-1)^{i}a_{i}\left( \frac{q-1}{q}\right)^{n-i}\\
                                                   & = & (1-q)^{n} \sum_{i=0}^{n-c} a_{i} \left( \frac{q}{1-q} \right)^{i}\\
                                                   & = & (1-q)^{c}\cdot \Rel{\mathcal{BR}(G,\preceq)}.
 \end{eqnarray*}
As the image (and preimage) of a dense set under a linear fractional transformation is again dense, we see that the roots of the reliability polynomials of broken circuit complexes are also dense in the complex plane. 
\end{proof}

So why is it so hard to find reliability roots of graphs outside the disk? The answer must be that there is a more restrictive property than shellability at play.

\subsection{Matroids}

We have mentioned that every matroid is shellable, but they are indeed a proper subclass, and we have seen that reliability of graphs is the same as the reliability of a certain family of matroids (namely cographic matroids), so it is reasonable to see what happens for the reliability roots of matroids -- do they behave as they do for graphs (and cographic matroids), or does the extension past cographic matroids allow for the kind of extreme behaviour we have seen for other shellable complexes? It is easy to observe that the sum of two matroids is again a matroid, so as reliability is multiplicative over connected components, the reliability roots of a sum of matroids is the union of the reliability roots of the components. Thus we can assume that matroids under consideration for their roots being in the unit disk are connected.

All complexes of dimension at most $1$ are matroids, and we have already seen that such complexes have their reliability roots in the unit disk. In \cite{W1} it was shown that uniform matroids $U_{n,r}$ (those on an set $X$ of size $n$ whose facets are all $r$-subsets of $X$) have their reliability roots in the unit disk. Moreover, it was also shown there that the same is true of reliability roots of cographic matroids of \emph{series-parallel} graphs (those that can be built up from a single vertex by series and parallel operations). As well, we can prove that the real reliability roots of all matroids lie in the unit disk.

\begin{theorem}
The real reliability roots of any matroid $\mathcal{M}$ lie in $[-1,0) \cup \{1\}$, and hence lie in the unit disk.
\end{theorem}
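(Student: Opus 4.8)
The plan is to show that the reliability polynomial of a matroid $\mathcal{M}$, or equivalently (up to roots at $q=1$) its H-polynomial $h_{\mathcal{M}}(q)$, cannot vanish at a negative real number strictly less than $-1$, nor at any positive real number other than $1$. The positive side is immediate: for $q \in (0,1)$ every term $q^{|F|}(1-q)^{|X\setminus F|}$ in $\Rel{\mathcal{M}}$ is strictly positive (the empty face alone gives $(1-q)^m > 0$), so $\Rel{\mathcal{M}}>0$ there; for $q>1$ one checks that $\Rel{\mathcal{M}}$ has constant sign determined by the leading term, using that a loopless, coloopless matroid has $H_d \neq 0$ and all $H_i \ge 0$, so $h_{\mathcal{M}}(q)$ is a polynomial with nonnegative coefficients, not identically zero, and hence has no positive real roots at all except possibly ones forced by the structure — in fact a polynomial with nonnegative coefficients and nonzero constant term ($H_0 = F_0 = 1$) has no positive real roots whatsoever, so the only positive real reliability root is $q=1$ coming from the $(1-q)^{m-d}$ factor. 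That already disposes of $(0,\infty)$.

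The substance is therefore the interval $(-\infty,-1)$. First I would reduce to the loopless, coloopless case: loops only contribute factors of $(1-q)$ (roots at $1$), and a coloop $x$ gives $\del_x\mathcal{M} = \lk_x\mathcal{M}$ with the same reliability, so by induction on the number of coloops we may assume $\mathcal{M}$ has none. Now I want to exploit the deletion–contraction (link) recursion $\Rel{\mathcal{M}} = (1-q)\Rel{\del_x\mathcal{M}} + q\,\Rel{\lk_x\mathcal{M}}$, together with the facts that $\del_x\mathcal{M}$ and $\lk_x\mathcal{M}$ are again matroids, and that for a connected matroid one can choose $x$ so that both minors are again loopless. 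The key quantitative claim to set up and then prove by induction on $|X|$ is a sign statement at $q = -t$ for $t > 1$: something like $(-1)^?\,\Rel{\mathcal{M}}(-t) > 0$, with an exponent depending on the rank $d$ (for a rank-$d$ loopless coloopless matroid, $h_{\mathcal{M}}(-t)$ should have sign $(-1)^d$ for $t \ge 1$, since $h_{\mathcal{M}}$ has nonnegative coefficients with $H_0=1>0$ and $H_d>0$ — wait, nonnegative coefficients already forbid negative-to-even-degree sign changes, so in fact $h_{\mathcal{M}}(-t) \neq 0$ for \emph{all} $t>0$ once we know $H_i\ge 0$ and $H_0\ne 0$).

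Indeed, the cleanest route avoids deletion–contraction entirely: since $\mathcal{M}$ is a matroid (hence shellable), its H-vector $\langle H_0,\dots,H_d\rangle$ consists of nonnegative integers with $H_0 = F_0 = 1$. A polynomial $h_{\mathcal{M}}(q) = \sum_{i=0}^d H_i q^i$ with all $H_i \ge 0$ and $H_0 = 1$ satisfies $|h_{\mathcal{M}}(q)| \ge 1 > 0$ for every real $q \le 0$ — in fact for $q \in (-1,0]$ trivially, and for $q \le -1$ we need a genuine estimate: $h_{\mathcal{M}}(q) = 1 + \sum_{i\ge 1} H_i q^i$ need not have modulus $\ge 1$ in general. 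So the honest argument must use more than sign of coefficients. Here I would invoke the interval-partition interpretation: $H_i$ counts lower sets $L_j$ of size $i$, and $\Rel{\mathcal{M}} = \sum_j q^{|L_j|}(1-q)^{|X\setminus U_j|}(1 + q/(1-q))^{\,|U_j\setminus L_j|}$ reassembles as $\sum_j q^{|L_j|}(1-q)^{m-|U_j|}$; grouping the original sum $\sum_F q^{|F|}(1-q)^{m-|F|}$ over the interval $[L_j,U_j]$ gives the term $q^{|L_j|}(1-q)^{m - |U_j|}$, so $\Rel{\mathcal{M}}(q) = \sum_j q^{|L_j|}(1-q)^{m-|U_j|}$ with each $|U_j| \le d$. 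For real $q < 0$ we have $q^{|L_j|}$ of sign $(-1)^{|L_j|}$ and $(1-q)^{m-|U_j|} > 0$; the signs need not agree across $j$, so this still does not immediately finish it.

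The main obstacle, then, is precisely controlling these competing signs for $q < -1$, and I expect the resolution to be the deletion–contraction induction after all: prove by induction on $|X|$ that for a loopless matroid $\mathcal{M}$ of rank $d$ and every real $q < 0$, $(-1)^d\Rel{\mathcal{M}}(q)/(1-q)^{m-d} > 0$, equivalently $(-1)^d h_{\mathcal{M}}(q) > 0$. The base cases $d \le 1$ are the computations already in the excerpt. For the inductive step, pick a non-loop, non-coloop element $x$ (possible when $d \ge 2$ and $\mathcal{M}$ is connected; handle disconnected $\mathcal{M}$ by multiplicativity and handle coloops by the reduction above): then $\del_x\mathcal{M}$ has rank $d$ and $\lk_x\mathcal{M}$ has rank $d-1$, both are loopless matroids (checking looplessness of $\lk_x$ is the delicate point — one may need to delete the elements that become loops, which only removes further $(1-q)$ factors and does not change the sign claim), and from $h_{\mathcal{M}}(q) = (1-q)h_{\del_x\mathcal{M}}(q)\cdot(\text{adjustment}) + \dots$ — more carefully, from $\Rel{\mathcal{M}} = (1-q)\Rel{\del_x\mathcal{M}} + q\Rel{\lk_x\mathcal{M}}$ — the inductive sign hypotheses give that both summands have sign $(-1)^d$ for $q<0$ (the first because $(1-q)>0$ and $(-1)^d\Rel{\del_x\mathcal{M}}>0$; the second because $q<0$ and $(-1)^{d-1}\Rel{\lk_x\mathcal{M}}>0$), so their sum is nonzero of that sign, completing the induction and hence showing $\Rel{\mathcal{M}}$ has no real root in $(-\infty,0)$ other than... in fact none, so combined with the first paragraph the only real reliability roots lie in $[-1,0)\cup\{1\}$ — and we have in fact shown the stronger statement that there are no real roots in $(-\infty,0)$ at all unless $d\ge 2$ forces... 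I would double-check the boundary, since the theorem explicitly permits roots in $[-1,0)$, so the induction must be stated to allow vanishing exactly there (e.g. the rank-$2$ complexes $\mathcal{P}_m$ are not matroids, but $U_{n,2}$ has $h(q)=1+(n-1)q$ with root $-1/(n-1)\in[-1,0)$). Thus the correct inductive invariant is the weaker $(-1)^d h_{\mathcal{M}}(q) > 0$ for $q < -1$ (strict), allowing equality/sign-change only on $[-1,0)$, and the deletion–contraction step preserves this because $q<-1 \Rightarrow 1-q>2>0$ and $q<0$. The looplessness bookkeeping in the link is where I expect to spend the most care.
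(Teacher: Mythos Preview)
Your treatment of nonnegative real roots is correct and matches the paper: since $H_0=1$ and all $H_i\ge 0$ (matroids are shellable), $h_{\mathcal{M}}$ has no positive real root, so the only nonnegative real reliability root is $q=1$ from the factor $(1-q)^{m-d}$.

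The deletion--contraction induction for $q<-1$, however, has a genuine gap. Your inductive hypothesis is that $(-1)^d h_{\mathcal{M}}(q)>0$ for every loopless matroid $\mathcal{M}$ of rank $d$ and every $q<-1$. This is simply false: take $\mathcal{M}=U_{3,3}$, which is loopless of rank $3$ with $h_{\mathcal{M}}\equiv 1$, so $(-1)^3 h_{\mathcal{M}}(q)=-1<0$. You cannot exclude such matroids from the induction, because they arise as deletion minors of connected, coloop-free matroids: $\del_x U_{4,3}=U_{3,3}$ for any $x$. More generally, whenever $x$ lies in a two-element cocircuit of $\mathcal{M}$, the deletion $\del_x\mathcal{M}$ acquires a coloop, its $h$-polynomial has degree strictly less than $d$, and your sign claim for that summand fails. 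In the recursion $h_{\mathcal{M}}=h_{\del_x\mathcal{M}}+q\,h_{\lk_x\mathcal{M}}$ at $q<-1$ the two terms can then have opposite signs, so nonvanishing of the sum requires a quantitative comparison you have not supplied. Your ``handle coloops by the reduction above'' does not help: removing coloops preserves $h$ but lowers the rank, so the sign $(-1)^d$ you need and the sign $(-1)^{d-c}$ the reduced matroid actually delivers differ when $c$ is odd. Multiplicativity over components likewise fails, since a single coloop component $U_{1,1}$ has $h=1$ and rank $1$.

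The paper avoids this entirely by quoting the Brown--Colbourn inequality from \cite{BC1}: for a \emph{connected} matroid and every $b\ge 1$ and every $j\le d$,
\[
(-1)^j\sum_{i=0}^j(-b)^iH_i\ \ge\ 0,
\]
strict for $b>1$; taking $j=d$ gives $(-1)^d h_{\mathcal{M}}(-b)>0$ for $b>1$ at once. The point is that this \emph{stronger} family of inequalities (all partial sums, not just $j=d$) is what survives the deletion--contraction recursion; your single top-level sign statement does not. If you want to carry out an induction yourself rather than cite \cite{BC1}, you will need to strengthen the inductive invariant to something of this partial-sum type, and to track connectivity of minors (e.g.\ via the fact that for a connected matroid some single-element deletion or contraction remains connected).
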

\begin{proof}
We can assume that $\mathcal{M}$ is connected and has no loops or coloops. Note that as
\[ \Rel{\mathcal{C}} = (1-q)^{m-d} \sum_{i=0}^{d}H_{i}q^{i}\]
and all the $H_{i}$ are positive, there are clearly no non-negative roots except $1$.
Moreover, in \cite{BC1} it was shown that for any connected matroid $\M$ of rank $d$ with H-vector $\langle H_{0},H_{1},\ldots,H_{d}\rangle$, any $b \geq 1$ and any $j \in \{0,1,\ldots,d\}$, we have that 
\begin{eqnarray}
(-1)^{j} \sum_{i=0}^{j} (-b)^{i}H_{i} & \geq & 0 \label{BCconditiongen}
\end{eqnarray} 
with equality possible only if $b = 1$. (In particular,
\begin{eqnarray}
\sum_{i=0}^{d} (-1)^{d-i}H_{i} & \geq & 0, \label{BCCondition}
\end{eqnarray}
and we shall often make use of this inequality throughout the paper).
Taking $j = d$ above, we find that 
\begin{eqnarray*}
 (-1)^{d} h_{\mathcal{M}}(-b) & > & 0
 \end{eqnarray*}
for $b >1$, and hence $h_{\mathcal{M}}(q)$ is nonzero for $q \in (1,\infty)$.
It follows that $\Rel{\mathcal{C}}$ has no roots less than $-1$. It follows that all of the real roots of $\Rel{\mathcal{C}}$ lie in $[-1,0) \cup \{1\}$.
\end{proof}

So we see that the usual techniques from {\em real} analysis (such as the Intermediate Value Theorem) that one might use to try to locate roots outside the unit disk are going to fail for matroids, as then any such root must necessarily be nonreal. From \cite{BM1} and \cite{RS1} we know that there are (nonreal) reliability roots of some matroids (namely cographic matroids) that lie a little bit outside the disk. These matroids have dimension $13$ or higher. Might there be matroids of small dimension with reliability roots outside the unit disk?

\section{Reliability Roots of Matroids of Small Dimension}
	\label{sec:Complexes}

Because of earlier remarks, we can assume all matroids (and complexes) have no loops or coloops. From the previous section, any matroid of rank $0$ or $1$, being a complex, has its roots in the unit disk (in fact, any complex of dimension at most $1$ is trivially a matroid). The next case is rank $2$.  We have already seen that for general shellable $2$-dimensional complexes, the reliability roots can be unbounded. However, what about if we insist on the complex being a matroid? Then the situation becomes markedly different.
		
\begin{theorem}
Let $\mathcal{M}$ be a matroid of rank $2$ and order $m$.  Then the roots of $\Rel{\mathcal{M}}$ are all inside the closed unit disk.
\end{theorem}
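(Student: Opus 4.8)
The plan is to reduce to the case that $\mathcal{M}$ is loopless and coloopless, compute the $F$-vector of such a matroid exactly, and then invoke Theorem~\ref{dim2complex}. For the reduction: removing a loop $x$ multiplies $\Rel{\mathcal{M}}$ by $1-q$, introducing only the root $q=1$, which lies in the unit disk; and if a loopless $\mathcal{M}$ has a coloop $x$, then $\mathcal{M}$ and $\mbox{link}_{x}\mathcal{M}$ have the same reliability while $\mbox{link}_{x}\mathcal{M}$ is a matroid of rank $1$, whose reliability roots are already known to lie in the unit disk. So from now on assume $\mathcal{M}$ is a loopless, coloopless matroid of rank $2$ and order $m$.

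Next I would use the structure of rank-$2$ matroids: the rank-$2$ flats (parallel classes) partition the ground set into classes of sizes $a_{1},\dots,a_{k}$ with $\sum_{i}a_{i}=m$, and a pair $\{x,y\}$ is a basis exactly when $x$ and $y$ lie in different classes. Since $\mathcal{M}$ is loopless every singleton is a face, so $\mathcal{M}$ is purely $2$-dimensional with $F$-vector $\langle 1, m, F_{2}\rangle$, where
\[ F_{2}=\sum_{i<j}a_{i}a_{j}=\binom{m}{2}-\sum_{i}\binom{a_{i}}{2}=\tfrac12\Big(m^{2}-\sum_{i}a_{i}^{2}\Big). \]
The coloopless hypothesis forces $k\ge 2$, and moreover $a_{1},a_{2}\ge 2$ when $k=2$ (a singleton class together with $k=2$ would give a coloop); in particular $m\ge 3$. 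By Theorem~\ref{dim2complex} it suffices to show $F_{2}\notin[\tfrac{m}{2},m-2]\cup[m,2m-5]$, and for this it is enough to prove the single inequality $F_{2}\ge 2m-4$: indeed $2m-4>2m-5$ and $2m-4\ge m-1$ for $m\ge 3$, so $F_{2}\ge 2m-4$ already places $F_{2}$ outside both intervals.

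It then remains to establish $F_{2}\ge 2m-4$ by a short case split on $k$. If $k=2$ then $F_{2}=a_{1}a_{2}$ and $(a_{1}-2)(a_{2}-2)\ge 0$ yields $a_{1}a_{2}\ge 2(a_{1}+a_{2})-4=2m-4$. If $k\ge 3$, then among partitions of $m$ into $k$ positive parts the quantity $\sum_{i}a_{i}^{2}$ is maximized by the most unbalanced one, $(m-k+1,1,\dots,1)$; the value $(m-k+1)^{2}+(k-1)$ is decreasing in $k$ on $[1,m]$, hence at most its value at $k=3$, namely $(m-2)^{2}+2$, so $F_{2}\ge\tfrac12\big(m^{2}-(m-2)^{2}-2\big)=2m-3>2m-4$. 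This would complete the proof.

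The computations here are routine; the only points requiring attention are the loop/coloop reduction and the reminder that a rank-$2$ matroid may have parallel elements, so its $F$-vector is in general not that of the uniform matroid $U_{m,2}$. I do not anticipate a genuine obstacle.
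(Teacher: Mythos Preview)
Your proof is correct and follows essentially the same route as the paper: both arguments identify the $2$-skeleton of a loopless, coloopless rank-$2$ matroid as a complete multipartite graph, establish the bound $F_{2}\ge 2m-4$, and then appeal to Theorem~\ref{dim2complex}. The only difference is cosmetic---the paper reduces to $k=2$ by merging cells and then uses monotonicity of $j(m-j)$, whereas you split into $k=2$ and $k\ge 3$ and bound $\sum a_i^2$ directly---but the content is the same.
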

\begin{proof}
A simple observation is that the graph determined by the $2$-skeleton $G$ of $\mathcal{M}$ of rank $d \geq 2$ must be a complete multipartite graph. (The argument centers on the facts that (a) for $k \leq d$ the $k$-skeleton of a matroid is again a matroid, (b) a graph is complete multipartite if and only if it does not contain $\overline{P_{3}}$, the complement of a path of order $3$, as an induced subgraph that is isomorphic to the complement of a path on $3$ vertices, and (c) a graph is a matroid if and only if it does not contain an induced $\overline{P_{3}}$.) We will show that the number of edges of $G$ (i.e. $F_{2}$ of $\mathcal{M}$) is at least $2m-4$; by Theorem~\ref{dim2complex}, $\mathcal{M}$ will have all of its reliability roots in the unit disk.

Let the cells of the complete multipartite graph $G$ be $V_{1},V_{2},\ldots,V_{k}$. Clearly $k \geq 2$ as $\mathcal{M}$, being of rank $2$, has a face of cardinality $2$. If $k \geq 3$, one can combine cells and decrease the number of edges, so we can assume that $k = 2$. Thus for some $j \in \{2,3,\ldots,\lfloor m/2 \rfloor\}$, $G = K_{j,m-j}$ ($j \neq 1$ as otherwise $\mathcal{M}$ has a coloop, and the proof reverts to the dimension $1$ case). It is easy to see that $G$ has $j(m-j) \geq 2(m-2) = 2m-4$ as the function $g(x) = x(m-x)$ is increasing on $[2,m/2]$.  Thus $G$ has at least $2m-4$ edges, and hence $F_{2} \geq 2m-4$. From Theorem~\ref{dim2complex}, we conclude that all the roots of $\mathcal{M}$ lie in the unit disk.
\end{proof}

We now turn to rank $3$ matroids, where again we can prove that the reliability roots are always in the unit disk (along the way, we prove a new nonlinear inequality among the terms in the $H$-vector of matroids).

\begin{theorem}\label{rank3}
Let $\mathcal{M}$ be a rank $3$ matroid.  Then all the roots of $\Rel{\mathcal{M}}$ lie inside the unit disk.
\end{theorem}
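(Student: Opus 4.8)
The plan is to reduce, as usual, to the case where $\mathcal{M}$ is connected with no loops or coloops, so that the roots of $\Rel{\mathcal{M}}$ coincide with those of the cubic
\[ h_{\mathcal{M}}(q) = 1 + H_1 q + H_2 q^2 + H_3 q^3, \]
where $H_0 = 1$, $H_1 = m-3$ ($m$ the order of $\mathcal{M}$), $H_2 \ge 0$, and $H_3 \ge 1$ (the last because $\mathcal{M}$ is coloopless and matroids are shellable). Dividing by the leading coefficient $H_3$ and applying the Schur--Cohn criterion for a real cubic $q^3 + a_2 q^2 + a_1 q + a_0$ to have all of its roots in the closed unit disk --- one reduction step of which turns it into the requirement that $|a_0| \le 1$ and that the quadratic $(1-a_0^2)q^2 + (a_2 - a_0 a_1)q + (a_1 - a_0 a_2)$ satisfy the degree-two Hurwitz conditions used above --- the whole statement collapses to four inequalities in $H_1, H_2, H_3$. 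Three of them are immediate: $1/H_3 \le 1$ since $H_3 \ge 1$; $h_{\mathcal{M}}(1) > 0$ since the $H_i$ are nonnegative with $H_0 = 1$; and $h_{\mathcal{M}}(-1) \le 0$, which is exactly inequality~(\ref{BCCondition}). Everything therefore hinges on the one remaining inequality
\[ |H_1 H_3 - H_2| \le H_3^2 - 1, \qquad\text{equivalently}\qquad H_3^2 - H_1 H_3 + H_2 \ge 1 \ \text{ and } \ H_3^2 + H_1 H_3 - H_2 \ge -1, \]
and the first of these two bounds is a genuinely new nonlinear inequality among the entries of the $H$-vector of a rank $3$ matroid.

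To attack the two bounds I would pass to the $F$-vector via $H_1 = m-3$, $H_2 = F_2 - 2m + 3$, $H_3 = F_3 - F_2 + m - 1$, where $F_2$ is the number of independent pairs and $F_3$ the number of bases of $\mathcal{M}$. Read as a quadratic in $H_3$, the new bound $H_3^2 - H_1 H_3 + H_2 \ge 1$ has discriminant $(m+1)^2 - 4F_2$, so it holds automatically once $F_2 \ge (m+1)^2/4$; in particular it is automatic whenever $\mathcal{M}$ is simple and $m \ge 5$, since then $F_2 = \binom{m}{2}$ and $H_2 = \binom{m-2}{2}$. The other bound I would obtain from a lower bound on $H_3$ (the number of bases) of a connected rank $3$ matroid: for simple $\mathcal{M}$ the extremal configuration is a near-pencil, yielding $H_3 \ge \binom{m-2}{2} - 1$, after which the bound is a one-line computation. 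The remaining cases --- small $m$ (for instance $m = 4$, where $\mathcal{M} = U_{3,4}$ and $h_{\mathcal{M}}(q) = 1 + q + q^2 + q^3 = (1+q)(1+q^2)$ has all its roots on the unit circle), the degenerate Schur--Cohn instance $H_3 = 1$ (which by a short argument forces $\mathcal{M} = U_{3,4}$), and non-simple matroids, where parallel classes drag $F_2$ down towards $3(m-2)$ --- I would push through using the deletion--contraction identity for the $h$-polynomial,
\[ h_{\mathcal{M}}(q) = h_{\mathcal{M}\setminus x}(q) + q\, h_{\mathcal{M}/x}(q) \qquad (x \text{ not a coloop}), \]
choosing $x$ inside a nontrivial parallel class, so that $\mathcal{M}/x$ is a loop adjoined to a rank $2$ matroid (hence $h_{\mathcal{M}/x}$ is the well-understood $h$-polynomial of a rank $2$ matroid), $\mathcal{M}\setminus x$ is a rank $3$ matroid of smaller order, and one can induct on $m$.

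The hard part will be the new nonlinear inequality $H_3^2 - H_1 H_3 + H_2 \ge 1$ precisely for the non-simple connected rank $3$ matroids: there $F_2$ can fall far below $\binom{m}{2}$, so the discriminant argument no longer closes the gap, and what is really needed is a sharp lower bound on the number of bases (equivalently, control of $H_3$) of a connected rank $3$ matroid in terms of $m$ and $F_2$ --- a bound that must be tight simultaneously at $U_{3,4}$ and at the extremal parallel extensions. Establishing it, presumably via the geometry of the rank $2$ flats of $\mathcal{M}$ together with submodularity of the rank function, or by carefully tracking how coloops and disconnected summands arise in the $h$-polynomial deletion--contraction recursion, is the crux of the theorem.
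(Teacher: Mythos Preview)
Your reduction is correct and runs parallel to the paper's: both arrive, via a cubic root-location criterion (you use Schur--Cohn, the paper uses Farebrother), at the same decisive nonlinear inequality $H_3^{2} - H_1 H_3 + H_2 \geq 1$ (the paper's condition \reff{eqn:cubicd8}). Your packaging is even slightly cleaner in one respect: the paper's third Farebrother condition $3H_3 + H_2 - H_1 - 3 \geq 0$, for which it invokes the rank-$3$ case of Stanley's conjecture via H\'a--Stokes--Zanello, does not appear as a separate obligation in your list --- it is absorbed into the absolute-value bound together with $H_3 \geq 1$.

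The genuine gap is the non-simple case of the key inequality, which you acknowledge but do not close. The deletion--contraction route is unlikely to work as stated: writing $H_1 = H_1^{\setminus x}+1$, $H_2 = H_2^{\setminus x}+h_1^{/x}$, $H_3 = H_3^{\setminus x}+h_2^{/x}$, the quantity $H_3^{2} - H_1 H_3 + H_2 - 1$ does not decompose into the inductive hypothesis plus something manifestly nonnegative, and of course root location itself does not pass to sums of $h$-polynomials. The paper's argument is instead structural and uniform across the simple and non-simple cases: the $2$-skeleton of a loopless rank-$3$ matroid is a complete $k$-partite graph with $k \geq 3$, and the no-coloop hypothesis constrains the part sizes enough that the paper extracts $F_2 \geq 4m-12$, whence $H_2 \geq 2m-9$ and, via \reff{BCCondition}, $H_3 \geq m-5$. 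The nonlinear inequality then falls to a three-way case split on whether $H_3 \geq H_1 = m-3$, $H_3 = m-4$, or $H_3 = m-5$, together with a finite check for $m \leq 5$. That structural lower bound on $F_2$ (equivalently on $H_2$) is exactly the ``sharp lower bound'' you were reaching for through the geometry of rank-$2$ flats, and it dispenses with any simple/non-simple dichotomy or induction on $m$.
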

\begin{proof}
Clearly we can assume that $\mathcal{M}$ is connected, as otherwise its reliability is the product of the reliabilities of matroids of smaller rank, and we are done. It can be shown that if a matroid of order $m$ and rank $r$, without loops or coloops, is connected, then $H_{r} \geq m-r$ (see, for example, \cite[pg. 244]{bjorner}), and hence if $m > r$ (which we can always assume, as otherwise the matroid is a simplex with reliability $1$), $H_{r} \geq 1$. We shall make use of this here and throughout this section. 

Farebrother \cite{F1} proved that the roots of a real cubic polynomial $x^3 + a_1x^2 + a_2x + a_3 = 0$ lie in the \emph{open} unit disk $\{z : |z|<1\}$ if and only if the following conditions all hold:
\begin{eqnarray*}
1 + a_1 + a_2 + a_3 & > & 0\\
1 - a_1 + a_2 - a_3 & > &  0\\
3 + a_1 - a_2 - 3a_3 & > & 0\\
1 - a_3^2 + a_1a_3 - a_2 & > &  0
\end{eqnarray*}	
For our purposes, it will be better to rewrite these conditions for the cubic $a_0x^3 + a_1x^2 + a_2x + a_3 = 0$, where all $a_{i}$'s are positive:
\begin{eqnarray*}
a_0 + a_1 + a_2 + a_3 & > & 0\\
a_0 - a_1 + a_2 - a_3 & >  & 0\\
3a_0 + a_1 - a_2 - 3a_3 & > &  0\\
a_0^2 - a_3^2 + a_1a_3 - a_0a_2 & > & 0
\end{eqnarray*}	
It is well known that the roots of a (complex) polynomial depend  continuously on the coefficients \cite{HM1,HM2}. It follows that if we know that if $a_{0},a_{1},a_{2}$ and $a_{3}$ are all positive, then the roots of $a_0x^3 + a_1x^2 + a_2x + a_3 = 0$ are in the \emph{closed} unit disk $\{z : |z|\leq1\}$ if
\begin{eqnarray}
a_0 + a_1 + a_2 + a_3 & \geq & 0 \label{eqn:cubicd1}\\
a_0 - a_1 + a_2 - a_3 &\geq  & 0\label{eqn:cubicd2}\\
3a_0 + a_1 - a_2 - 3a_3 & \geq &  0\label{eqn:cubicd3}\\
a_0^2 - a_3^2 + a_1a_3 - a_0a_2 & \geq & 0\label{eqn:cubicd4}.
\end{eqnarray}		
(If not, there would be a root outside the disk. By increasing $a_{0}$ to $a_{0} + \varepsilon$ and $a_{1}$ to $a_{1}+ \varepsilon/2$, then provided $\varepsilon$ is sufficiently small but positive, we could keep the root outside the disk, but have strict inequality hold in all four conditions, a contradiction to Farebrother's result.)

Consider the reliability polynomial of $\mathcal{M}$:
$$\Rel{\mathcal{M}} = (1-q)^{m-r}(1 + H_1q + H_2q^2 + H_3q^3)$$
Clearly this has $m-r$ roots at $1$, and so we are only interested in the roots of
\begin{eqnarray}
\label{eqn:R3}
h_{\mathcal{M}}(q) & = & 1 + H_1q + H_2q^2 + H_3q^3.
\end{eqnarray}
We set $a_0 = H_3,~a_1 = H_2,~a_2 = H_1~\mbox{and}~a_3 = 1$.
Then rephrasing conditions \reff{eqn:cubicd1}, \reff{eqn:cubicd2}, \reff{eqn:cubicd3} and \reff{eqn:cubicd4}, we need to show
\begin{eqnarray}
H_3 + H_2 + H_1 + 1 & \geq & 0 \label{eqn:cubicd5}\\
H_3 - H_2 + H_1 - 1 &\geq  & 0\label{eqn:cubicd6}\\
3H_3 + H_2 - H_1 - 3 & \geq &  0\label{eqn:cubicd7}\\
H_{3}^{2} - 1 + H_{2} - H_{3}H_{1} & \geq & 0\label{eqn:cubicd8}.
\end{eqnarray}	

Conditions \reff{eqn:cubicd5}, \reff{eqn:cubicd6}, and \reff{eqn:cubicd7} are quite simple to show.  Since all coefficients of the H-vector are nonnegative, \reff{eqn:cubicd5} follows immediately.  For \reff{eqn:cubicd3}, we use \reff{BCCondition}, which implies for $r = 3$ that 
\[ -1 + H_1 - H_2 + H_3 \geq 0,\]
that is, condition \reff{eqn:cubicd6}.
				
For \reff{eqn:cubicd7}, we will use a result of Huy et al \cite{HSZ} that showed that matroids of rank $3$ satisfy Stanley's Conjecture; that is, that their H-vectors are pure $O$-sequences (the H-vectors of pure order ideals of monomials).  In particular, it follows that $H_0 \leq H_1 \leq \dots \leq H_{\lfloor{\frac{r}{2}}\rfloor}$, and $H_i \leq H_{d-i}$ for $ 0 \leq i \leq \lfloor\frac{r}{2}\rfloor$ (see \cite{hibi}).  So $H_3 \geq H_0$ and $H_2 \geq H_1$.  We conclude that
$$3H_3 + H_2 - H_1 - 3 \geq 0,$$
so condition  \reff{eqn:cubicd7} is true.
				
The last inequality to check is \reff{eqn:cubicd8}.  We start by observing that the $2$-skeleton of a rank-$3$ matroid must be a complete $k$-partite graph $G$ with $ k \geq 3$ (as mentioned earlier, for $k \leq r$, the $k$-skeleton of a rank $r$ matroid is also a matroid). Indeed, if it were just a complete bipartite graph, then the dimension of $\mathcal{M}$ would be $2$.  Furthermore, if $k = 3$, then each cell of the complete $k$-partite graph must have cardinality at least $2$, as otherwise there would be a coloop in the matroid.  Therefore, the smallest possible number of edges in $G$ would be a complete $3$-partite graph with independent sets having $2, 2$ and $m-4$ elements, and from this we can determine that $F_2 \geq 4m-12$.  

Using this, the fact that $H_{1} = m-r = m-3$ and by considering the number of faces of cardinality $2$ covered in an interval partition of $\mathcal{M}$, we find that 
\begin{eqnarray}
H_2 =  F_2 - 2H_1 - 3H_0 \geq 2m-9\label{2m-9}
\end{eqnarray}
Finally, from this and \reff{BCCondition} we get
$$H_3 \geq H_2 - H_1 + H_0 \geq m - 5.$$

Now if $H_3 \geq m-3 = H_1$ then \reff{eqn:cubicd8} holds (strictly), as $H_{2} \geq 3$. Moreover, if $H_{3} = m-5$ then 
\begin{eqnarray*}
H_{3}^{2} - 1 + H_{2} - H_{3}H_{1} & = & H_{2}+(-2)(m-5)-1\\
                                                        & = & H_2 -(2m-9)\\
                                                        & \geq & 0
\end{eqnarray*}
by \reff{2m-9}. Our final case is that $H_{3} = m-4$. Here 
\begin{eqnarray*}
H_{3}^{2} - 1 + H_{2} - H_{3}H_{1} & = & H_{2}+(-(m-4))-1\\
                                                        & = & H_2 -(m-3)\\
                                                        & \geq & 0
\end{eqnarray*}                                                    
provided $2m-9 \geq m-3$, that is, provided that $m \geq 6$.
An exhaustive check of all matroids on less than $6$ vertices (whether of rank $3$ or otherwise) can confirm that no root falls outside of the closed unit disk.  Therefore, all matroids of rank $3$ have all of their roots inside of the closed unit disk.
\end{proof}
				
Unfortunately, we were unable to prove that all matroids of rank $4$ have their roots inside the disk, and so we shall focus on paving matroids of rank $4$. A \emph{paving matroid} of rank $r$ on a set $X$ of cardinality $m$ has a complete $(r-1)$-skeleton, that is, all $(r-1)$-subsets of $X$ are faces, so that its H-vector has the form $\langle 1,m-r,\ldots,{{m - r + i -1} \choose i},\ldots,{m-2 \choose r-1}, H_{r}\rangle$, with $H_{r}  = F_{r} - {{m-1} \choose {r-1}}$ \cite[Proposition 3.1]{merino}.  Every uniform matroid is obviously a paving matroid, and it has been conjectured that almost every matroid of order $m$ is a paving matroid.

\begin{theorem} The roots of a paving matroid $\mathcal{M}$ of rank $4$ all fall inside the closed unit disk. \end{theorem}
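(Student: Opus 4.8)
The strategy is to run the proof of Theorem~\ref{rank3} one degree higher, using a root-location criterion for quartics in place of Farebrother's cubic criterion. We may assume $\mathcal{M}$ is connected with no loops or coloops and that $m>4$ (else $\mathcal{M}$ is a simplex, with reliability $1$). A rank-$4$ paving matroid then has $H$-vector $\langle 1,\,m-4,\,\binom{m-3}{2},\,\binom{m-2}{3},\,H_{4}\rangle$, so
\[
h_{\mathcal{M}}(q)=1+(m-4)q+\binom{m-3}{2}q^{2}+\binom{m-2}{3}q^{3}+H_{4}q^{4};
\]
once $m$ is fixed only $H_{4}$ varies, so the theorem is really a one-parameter problem. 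First, I would pin down the range of $H_{4}$. Since $H_{4}=F_{4}-\binom{m-1}{3}=\binom{m-1}{4}-D$, where $D\ge 0$ counts the dependent $4$-subsets (with $D=0$ exactly for $U_{4,m}$), the upper bound is $H_{4}\le\binom{m-1}{4}$. The essential input is the lower bound: \reff{BCCondition} with $d=4$ gives $H_{4}\ge H_{3}-H_{2}+H_{1}-1$, and the Pascal identity $\binom{m-2}{3}-\binom{m-3}{2}=\binom{m-3}{3}$ rewrites it as
\[
H_{4}\ \ge\ \binom{m-3}{3}+m-5 ,
\]
a quantity of order $m^{3}/6$ --- it dominates $H_{2}=\binom{m-3}{2}$ and is comparable to $H_{3}=\binom{m-2}{3}$. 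The consequences I would lean on are $H_{4}\ge H_{2}$, $2H_{4}\ge H_{3}-H_{1}+2$, and $H_{1}H_{4}\ge H_{3}$, all valid once $m$ exceeds a small threshold.

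Next, I would transfer the problem to the half-plane: $h_{\mathcal{M}}$ has all roots in the closed unit disk if and only if $\hat h(z):=(1-z)^{4}h_{\mathcal{M}}\!\big(\tfrac{1+z}{1-z}\big)=\sum_{i=0}^{4}H_{i}(1+z)^{i}(1-z)^{4-i}$ has all roots in the closed left half-plane, and then apply the Routh--Hurwitz criterion to the real quartic $\hat h(z)=c_{4}z^{4}+c_{3}z^{3}+c_{2}z^{2}+c_{1}z+c_{0}$. Expanding, the coefficients come out as pleasant linear forms: $c_{0}=h_{\mathcal{M}}(1)$, $c_{4}=h_{\mathcal{M}}(-1)$, $c_{2}=2(3H_{4}-H_{2}+3)$, $c_{3}=4(H_{4}-1)-2(H_{3}-H_{1})$ and $c_{1}=4(H_{4}-1)+2(H_{3}-H_{1})$. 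Nonnegativity of all five $c_{i}$ is immediate: $c_{0}>0$ from positivity of the $H_{i}$, $c_{4}\ge 0$ is exactly \reff{BCCondition}, and $c_{1},c_{2},c_{3}\ge 0$ follow from the size bounds on $H_{4}$ above (for instance $c_{3}\ge 0$ is the inequality $2H_{4}\ge H_{3}-H_{1}+2$). As in the proof of Theorem~\ref{rank3}, continuity of the roots in the coefficients lets me work with the closed half-plane, so Routh--Hurwitz reduces the theorem to the two Hurwitz-determinant inequalities
\[
\Delta_{2}=c_{2}c_{3}-c_{1}c_{4}\ \ge\ 0,\qquad
\Delta_{3}=c_{1}c_{2}c_{3}-c_{3}^{2}c_{0}-c_{1}^{2}c_{4}\ \ge\ 0 .
\]
(In the boundary case $c_{4}=h_{\mathcal{M}}(-1)=0$, the point $-1$ is itself a root on the unit circle; $h_{\mathcal{M}}(q)$ then factors as $(q+1)$ times a cubic with positive coefficients, to which Farebrother's criterion applies exactly as in Theorem~\ref{rank3}.) The inequality $\Delta_{2}\ge 0$ is the easy one: a quadratic in $H_{4}$ with positive leading coefficient, readily verified over the admissible interval.

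The substance is $\Delta_{3}\ge 0$. Substituting the $c_{i}$, this is a cubic in $H_{4}$ with coefficients polynomial in $m$ whose leading coefficient is $4\cdot 6\cdot 4-16-16=64>0$, so it is certainly positive once $H_{4}$ is large; the task is to verify it over the whole interval $\big[\binom{m-3}{3}+m-5,\ \binom{m-1}{4}\big]$. The cleanest route I can see is to show that, for each fixed $m$ beyond a small bound, $\Delta_{3}$ is nondecreasing in $H_{4}$ on this interval --- its $H_{4}$-derivative is an upward quadratic which one bounds below on the interval using the lower bound $H_{4}\ge\binom{m-3}{3}+m-5$ --- and is nonnegative at the left endpoint, the latter being a polynomial inequality in $m$ alone. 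The finitely many remaining small $m$ would be settled by an exhaustive check of the relevant matroids, just as for ranks $2$ and $3$. The main obstacle I anticipate is exactly this step: after expanding the binomials for $H_{1},H_{2},H_{3}$, the cubic $\Delta_{3}$ is a bulky polynomial, and producing a monotonicity-in-$H_{4}$ argument (or a workable factorization) that is uniform in $m$, while keeping the leftover small-$m$ range small enough to enumerate by computer, is where the real care will be needed.
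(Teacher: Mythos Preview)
Your route is essentially the paper's: Farebrother's quartic unit-disk test is equivalent to Routh--Hurwitz after the Cayley map, so your $\Delta_{3}\ge 0$ plays the role of the paper's single nonlinear condition $(H_{4}-1)(H_{4}^{2}-1)-H_{2}(H_{4}-1)^{2}+(H_{3}-H_{1})(H_{4}H_{1}-H_{3})>0$, and the paper verifies it by exactly the plan you propose---show the expression is increasing in $H_{4}$ and then evaluate at the Brown--Colbourn lower bound $H_{4}=\binom{m-3}{3}+m-5$. The computation you flag as the obstacle does go through cleanly: with $l=m-4$ and $z=H_{4}$ the $z$-derivative is an upward quadratic with discriminant $-l^{4}-4l^{3}+l^{2}-8l+16<0$ for $l\ge 2$, and the value at the endpoint factors as $\tfrac{1}{216}\,l(l-1)(l-2)(l^{2}-l+12)(l^{4}+l^{3}+5l^{2}-l+12)>0$ for $l\ge 3$, so the argument covers all $m\ge 7$; the residual cases $m=5,6$ are dispatched not by a matroid enumeration but by Enestr\"om--Kakeya (the $H$-vector is already nondecreasing, up to two exceptional polynomials checked by hand).
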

\begin{proof}
Again, we can assume that $\mathcal{M}$ is connected and has no loops or coloops, and hence $m \geq 5$. If $m = 5$ then the $H$-polynomial of $\mathcal{M}$ has the form $H_{4}q^4 + q^3 + q^2 + q + 1$, while for $m = 6$, it has the form $H_{4}q^4 + 4q^3 + 3q^2 + 2q + 1$, where in either case $H_{4} \geq m-4$. The well-known \emph{Enestr\"om-Kakeya} Theorem (see, for example, \cite{PS1}), which states that if $f(x) = a_0 + a_1x + \dots + a_nx^n$ is a polynomial with $0 < a_1 \leq a_2 \leq \dots \leq a_n$ then the roots of $f(x)$ lie in the (closed) unit disk. Thus the roots of these polynomials are in the unit disk, except possibly for the polynomials $2q^4 + 4q^3 + 3q^2 + 2q + 1$ and $3q^4 + 4q^3 + 3q^2 + 2q + 1$. However, direct calculations in this case show that roots lie in the unit disk, so we can now assume that $m \geq 7$.

For this proof, we will be using Farebrother's \cite{F1} necessary and sufficient conditions for the moduli of all roots of quartic polynomials to fall inside the unit disk: for a real quartic polynomial
$x^4 + a_1x^3 + a_2x^2 + a_3x + a_4 = 0$, the roots fall inside the open unit disk if and only if 
\begin{eqnarray*}
1 &> & a_4 \\
3 + 3a_4 &> & a_2\\
1 + a_1 + a_2 + a_3 + a_4 &> &  0\\
1 - a_1 + a_2 - a_3 + a_4 &> & 0 \\
(1-a_4)(1-a_4^2) - a_2(1-a_4)^2 + (a_1 - a_3)(a_3 - a_1a_4) &> &0.
\end{eqnarray*}		
By a similar argument as we used in Theorem \ref{rank3} regarding adjusting coefficients slightly to force roots to be inside of the disk in the case where there is an equality, it follows that the roots are in the (closed) unit disk if	
\begin{eqnarray}
1 &> & a_4 \label{eqn:quarticd1}\\
3 + 3a_4 & \geq & a_2 \label{eqn:quarticd2}\\
1 + a_1 + a_2 + a_3 + a_4 & \geq &  0\label{eqn:quarticd3}\\
1 - a_1 + a_2 - a_3 + a_4 & \geq & 0 \label{eqn:quarticd4}\\
(1-a_4)(1-a_4^2) - a_2(1-a_4)^2 + (a_1 - a_3)(a_3 - a_1a_4) &> &0.\label{eqn:quarticd5}
\end{eqnarray}	

The H-polynomial of $\mathcal{M}$ (whose roots we are interested in) is given by 
\begin{eqnarray*}
h_{\mathcal{M}}(q) & = & H_0 + H_1 q + H_2 q^2 + H_3 q^3 + H_4 q^4\\
                   & = & 1 + (m-4)q+{{m-3} \choose 2}q^2 + {{m-2} \choose 3}q^3 + H_4q^4\\
			    & = & H_4 \left( \frac{1}{H_4} + \frac{m-4}{H_4}q + \frac{{{m - 3} \choose 2}}{H_4}q^2 + \frac{{{m-2} \choose 3}}{H_4}q^3 + q^4 \right) .
\end{eqnarray*}
By \reff{eqn:quarticd1}--\reff{eqn:quarticd5}, we need only show that
\begin{eqnarray}
H_4 & > & 1 \label{eqn:quarticd6}\\
3H_4 + 3 & \geq & H_2 \label{eqn:quarticd7}\\
H_4 + H_3 + H_2 + H_1 + 1 & \geq &  0\label{eqn:quarticd8}\\
H_4 - H_3 + H_2 - H_1+ 1 & \geq & 0 \label{eqn:quarticd9}\\
(H_4 - 1)(H_{4}^{2} - 1) - H_2(H_4 - 1)^2 + (H_3 - H_1)(H_4H_1-H_3) & > &0.\label{eqn:quarticd10}
\end{eqnarray}	

Clearly condition \reff{eqn:quarticd6} holds as $H_4 \geq m-4 \geq 3$. Condition \reff{eqn:quarticd8} holds since all of the $H_i$ are positive. Condition \reff{eqn:quarticd9} follows directly from  inequality \reff{BCCondition}, and implies 
\begin{eqnarray}
	H_4 &\geq & H_3 - H_2 + H_1 - H_0 = \frac{1}{6}m^3 - 2m^2 + \frac{53}{6}m - 15.	\label{eqn:H4BCineq}
\end{eqnarray}
It follows that 
\begin{eqnarray*}
3H_4 + 3 \geq \frac{1}{2}m^3-6m^2+\frac{53}{2}m-42 > \frac{1}{2}m^2-\frac{7}{2}m+6 =  H_2
\end{eqnarray*}
as $m \geq 7$, so condition \reff{eqn:quarticd7} holds. 

All that remains is condition \reff{eqn:quarticd10}. We set $l = H_1 = m-4$ (which is at least $3$). We substitute into the left-hand side of \reff{eqn:quarticd10} the values for $H_{i} = {{l+i-1} \choose i}$ for $i \leq 3$ and set $z = H_{4}$. We need to show that 
\begin{eqnarray}
(z - 1)(z^2 -1) - {{l+1} \choose 2}(z-1)^{2} + & & \nonumber \\
\left( {{l+2} \choose 3} - l \right) \left( zl - {{l+2} \choose 3} \right) & > & 0.\label{eqn:P4ineq5inz}
\end{eqnarray}

Let's denote the left-hand side of \reff{eqn:P4ineq5inz} by $f = f(z)$.
The derivative $f^{\prime}(z)$ is a quadratic in $z$ that opens up, and one can verify that its discriminant is $-l^4-4l^3+l^2-8l+16$, which is positive for $l \geq 2$. Thus $f$ is increasing on $[2,\infty)$, so that as $l \geq 3$, $z \geq \frac{1}{6}m^3 - 2m^2 + \frac{53}{6}m - 1 = \frac{1}{6}l^3 + \frac{5}{6}l - 1 > 2$
\begin{eqnarray*} 
f(z) & \geq & f\left( \frac{1}{6}l^3 + \frac{5}{6}l - 1 \right) \\
     & = & \frac{l(l-2)(l-1)(l^2-l+12)(l^4+l^3+5l^2-l+12)}{216}\\
     & > & 0
\end{eqnarray*}
(both the polynomials $l^2-l+12$ and $l^4+l^3+5l^2-l+12$ have only complex roots, and hence are always positive). Therefore, \reff{eqn:quarticd10} holds, and we are done.
\end{proof}
				
\section{Reliability Roots of Matroids Outside the Disk}
		In the previous section, we showed that the roots of matroids of rank at most $3$ and paving matroids of rank $4$ have their roots inside of the unit disk. There are, to be sure, reliability roots of matroids outside the disk --  we know from network reliability that there are graphs whose all terminal reliability roots have modulus greater than $1$, and hence there are cographic matroids that have roots outside of the disk. Are there other matroids with reliability roots outside the unit disk?  		
		There exist, of course, operations on matroids that would yield roots outside of the unit disk. The direct sum $\M_1 \oplus \M_2$ of matroids $\M_1$ and $\M_2$, on disjoint sets $X_{1}$ and $X_{2}$, respectively, has as its faces $\sigma_{1} \cup \sigma_{2}$, where $\sigma_i$ is a face of $\M_1$ ($i = 1,~2$). It is easy to see that $\Rel{\M_1 \oplus \M_2} = \Rel{\M_1}\Rel{\M_2}$, and so the reliability roots of $\M_1 \oplus \M_2$ is simply the union of the reliability roots of $\M_1$ and $\M_2$.  Therefore, if either $\M_1$ or $\M_2$ have roots outside of the disk, then $\M_1 \oplus \M_2$ will have roots outside of the disk.  It follows that we can embed any matroid in another that has a root outside of the unit disk. However, this seems somewhat artificial. 
        
        Two other operations, though, yield other matroids with reliability roots outside the disk. We define a \emph{$k$-thickening} of a matroid at vertex $v$, denoted by $\mbox{Th}(\mathcal{M},v,k)$, to be the matroid such that 
\begin{itemize}
	\item if $v$ is an element in a face $\sigma \in \mathcal{M}$ then 
$$(\sigma \setminus \{v\}) \cup \{v_1\}, (\sigma \setminus \{v\}) \cup \{v_2\}, \dots , (\sigma \setminus \{v\}) \cup \{v_k\} \in \mbox{Th}(\mathcal{M},v,k),$$
\item if $v$ is not an element in a face $\sigma \in \M$, then $\sigma \in \mbox{Th}(\M,v,k)$. 
\end{itemize}
(In essence, we place $k-1$ new elements in parallel to $v$.)
A calculation shows that the H-polynomial of $\mbox{Th}(\mathcal{M},v,k)$ is given by
		$$h_{\mbox{Th}(\mathcal{M},v,k)}(q) = h_{\del_\M(v)}(q) + kxh_{\lnk_\M(v)}(q).$$
As $k$ grows large $kxh_{\lnk_\M(v)}(q)$ will dominate over $h_{\del_\M(v)}(q)$, that is, if we consider $(1/k)h_{\mbox{Th}(\mathcal{M},v,k)}(q)$, which has the same roots as $h_{\mbox{Th}(\mathcal{M},v,k)}(q)$, it approaches coefficient-wise to $xh_{\lnk_\M(v)}(q)$  which leads to the roots of $h_{\mbox{Th}(\mathcal{M},v,k)}(q)$ being close to those of $h_{\lnk_\M(v)}(q)$ (and $0$).  Therefore, if $h_{\lnk_\M(v)}(q)$ has a root outside of the disk, then $h_{\mbox{Th}(\mathcal{M},v,k)}(q)$ will also have a root outside of the disk provided $k$ is large enough.
		
Another operation that we will focus on is a generalization of replacing an edge of a graph with $k$ edges in parallel (an operation which Royal and Sokal used on a pair of opposite edges to construct their graph).	
We define the \emph{$k$-replacement} at vertex $v$, denoted by $\mbox{Rep}(\M,v,k)$, to be the matroid such that
		\begin{itemize}
			\item if $v$ is an element of a face $\sigma \in \mathcal{M}$ then $(\sigma \setminus \{v\}) \cup \{v_1, v_2, \dots, v_k\} \in $ $\mbox{Rep}(\M,v,k)$; and
			\item if $v$ is not an element of a face $\sigma \in \mathcal{M}$ then for any $\tau \subsetneq \{v_1, v_2, \dots, v_k\}, \sigma \cup \tau \in \mbox{Rep}(\M,v,k)$.
		\end{itemize}
		If we use a $k$-replacement on a single element $v$, then we get the $f$-polynomial
		\begin{equation}
			\label{eqn:genrelFPoly} 
			f_{\mbox{Rep}(\M,v,k)}(q) =q^kf_{\lnk_\M(v)}+ ((1 + q)^k-q^k)f_{\del_\M(v)}
		\end{equation}
However, it would be more beneficial to be able to use a $k$-replacement on multiple vertices simultaneously, with different values of $k$, and so we will introduce a multivariate generating polynomial of the matroid.  Given a matroid $\M$ on elements $v_1, v_2, \dots v_m$, for each face $\sigma \in \M$ we introduce the variable $q_i$ if $v_i \in \sigma$ and $p_i$ if $v_i \not\in\sigma$, and define the \emph{multivariate generating polynomial} by
\begin{eqnarray*}
\mgen{q_{1},p_{1},\ldots,q_{m},p_{m}} & = & \sum_{\sigma \in \M} \prod_{v_i \in \sigma} q_{i} \prod_{v_j \not\in \sigma} p_{j}.
\end{eqnarray*}
For example, consider the matroid $\M$ with facets
		$$\M = \{ \{1,2,3\},\{1,2,4\},\{2,3,4\}\}.$$
The faces of $\M$ are therefore 
\begin{eqnarray*}
& &\{ \emptyset,\{1\},\{2\},\{3\},\{4\},\{1,2\},\{1,3\},\{1,4\},\{2,3\},\{2,4\},\{3,4\},\{1,2,4\},\\
& &\{2,3,4\},\{1,2,3\}\}.
\end{eqnarray*}
The multivariate generating polynomial for $\M$ is given by 
\begin{eqnarray*}
& & p_1p_2p_3p_4+p_1p_2p_3q_4+p_1p_2p_4q_3+p_1p_2q_3q_4+p_1p_3p_4q_2+p_1p_3q_2q_4+\\
& & p_1p_4q_2q_3+p_1q_2q_3q_4+p_2p_3p_4q_1+p_2p_3q_1q_4+p_2p_4q_1q_3+p_3p_4q_1q_2+\\
& & p_3q_1q_2q_4+p_4q_1q_2q_3.
\end{eqnarray*}
%		Using mgen$(\M)$, we are able to generate the $f$- polynomial of the matroid \reff{eqn:genrelFPoly} by simply setting every $q_i= q^k$ and every $p_i = (1 + q)^k-q^k$.  Though this is a valid method to finding the reliability roots (we would still need to translate the $f$-polynomial into the $h$-polynomial), we can generate the reliability polynomial directly by setting $q_i = q^k$, and every $p_i = 1 - q^k$.  Indeed, if for each element $v_i$ we replace it by $\{v_{i_1},v_{i_2},\dots,v_{i_{k_i}}\}$, then a face that contained $v_i$ is present with probability $q^k$ (all of the vertices need to be chosen), and a face that didn't contain $v_i$ is present with probability $1 - q^k$ (any subset of $\{v_{i_1},v_{i_2},\dots,v_{i_{k_i}}\}$ can be chosen except for the entire set).  Therefore, 
For $\mathbf{k}=(k_1,k_2,\ldots,k_m)$, the reliability polynomial of the $\mathbf{k}$-replacement on all vertices, obtained by sequentially carrying out a $k_i$-replacement at vertex $v_i$, has reliability given by 
		\begin{equation}
			\label{eqn:genrelPR1}
\mgen{q^{k_1},1-q^{k_1}, q^{k_2}, 1-q^{k_2}, \dots, q^{k_m}, 1 -q^{k_m}}.
		\end{equation}

		%the $f$-polynomial of the matroid, as well as the deletion and link of the matroid.  For example, if we wish to calculate the $f$-polynomial, we need only set all $q_i = q$ and $p_i = 1$.  If we would like the deletion complex of an element, say $v_1$, then we set all $q_i = q$ and $p_i = 1$ for $i \neq 1$, and set $q_1 = 0$ and $p_1 = 1$. Similarly, if we would like the link complex of $v_1$ then we set all $q_i = q$ and $p_i = 1$ for $i \neq 1$, and set $q_1 = 1$ and $p_1 = 0$.  Therefore, by setting $(q_1,p_1,q_2,p_2,q_3,p_3,q_4,p_4) = (0,1,q,1,q,1,q,1)$ we get
		%$$\text{mgen}(M)(0,1,q,1,q,1,q,1) =  q^3 + 3q^2 + 3q + 1 = f_{\del_M(v_1)}(q) $$
		%and by setting $(q_1,p_1,q_2,p_2,q_3,p_3,q_4,p_4) = (1,0,q,1,q,1,q,1)$  we get
		%$$\text{mgen}(M)(1,0,q,1,q,1,q,1)= 2q^2 + 3q + 1 = f_{\lnk_M(v_1)}(q) $$
		%which is what we expected.
		
		%Connecting back to the $k$-path replacement, using this generating polynomial we can calculate $f_{\text{PR}(M,\textbf{v},\textbf{k})}(q)$, where $\textbf{v} = \{v_1, v_2, \dots, v_n\}$ and $\textbf{k} = \{k_1, k_2, \dots, k_n\}$ are the path replacement numbers, by setting
		%\begin{equation}
			%\label{eqn:genrelPR}
			% (q_1,p_1,q_2,p_2,\dots,q_n,p_n) = (q^{k_1},(1+q)^{k_1}-q^{k_1}, q^{k_2}, (1+q)^{k_2}-q^{k_2}, \dots, q^{k_n}, (1 + q)^{k_n}-q^{k_n})
		%\end{equation}
		%Indeed, if, for example, we want to replace every vertex by a $1$-path (which results in the same matroid), then we set all $q_i = q$ and $p_i = 1$ which is what we expect.  

As an example, suppose we start with Cog$(K_4)$, and carry out two $6$-replacements of a pair of parallel edges in the $K_{4}$ (which are vertices in its cographic matroid) to get matroid $\mathcal{RS}$. Then 
\begin{eqnarray*}
			\label{eqn:CogRS}
			\Rel{\text{Cog}(RS)} & = & \mgen{q^6, 1-q^6, q, 1-q, q^6, 1-q^6, q, 1-q, q, 1-q, q, 1-q}\\            
             & = & (1-q)^3 \left( 6q^{13}+10q^{12}+14q^{11}+18q^{10}+22q^9+26q^8+ \right.\\
			& & \left. 26q^7+22q^6+18q^5+14q^4+10q^3+6q^2+3q+1 \right)
\end{eqnarray*}
which is indeed the reliability of the Royle-Sokal graph, and has a root outside the disk.

		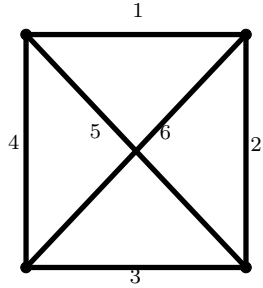
\begin{figure}[h!]
			\begin{center}
			\begin{tikzpicture}[line cap=round,line join=round,>=triangle 45,x=1.4617866644141715cm,y=1.5495027694673518cm]
\clip(0.5920883976871424,0.8502082193129823) rectangle (5.380748857115628,4.7224181425405);
\draw [line width=2.pt] (2.,4.)-- (2.,2.);
\draw [line width=2.pt] (4.,2.)-- (2.,2.);
\draw [line width=2.pt] (4.,4.)-- (4.,2.);
\draw [line width=2.pt] (4.,4.)-- (2.,4.);
\draw [line width=2.pt] (2.,4.)-- (4.,2.);
\draw [line width=2.pt] (2.,2.)-- (4.,4.);
\begin{scriptsize}
\draw [fill=black] (2.,4.) circle (2.0pt);
\draw [fill=black] (2.,2.) circle (2.0pt);
\draw[color=black] (1.888329246187612,3.075284070719839) node {$4$};
\draw [fill=black] (4.,2.) circle (2.0pt);
\draw[color=black] (2.9946749385383313,1.9276568226843918) node {$3$};
\draw [fill=black] (4.,4.) circle (2.0pt);
\draw[color=black] (4.0927643197521055,3.0587714484459476) node {$2$};
\draw[color=black] (3.0194438719491683,4.206398696481395) node {$1$};
\draw[color=black] (2.6313972485127217,3.174359804363187) node {$5$};
\draw[color=black] (3.267133206057538,3.1578471820892955) node {$6$};
\end{scriptsize}
\end{tikzpicture}
\end{center}
\label{fig:K4}
\caption{$K_4$ with edges labeled $\{1,2,\dots,6\}$}
\end{figure}
		
The upshot is that we can start with any matroid and carry out a $\mathbf{k}$-replacement in the hope of finding other reliability roots outside the unit disk. Using this approach we can indeed find a connected matroid that is not a cographic matroid with a reliability root outside the disk, as follows.
Consider the well-known \emph{Fano Matroid}, $F_7$, which is the matroid of order $7$ shown in Figure \ref{Fig:F7} (its facets are all $3$-tuples of the set $\{1,2,3,4,5,6,7\}$ except for the $3$-tuples whose vertices form a line or circle in the diagram).  This matroid is connected and is known to be non-cographic (and non-graphic) \cite[pg. 643-644]{O1}.

			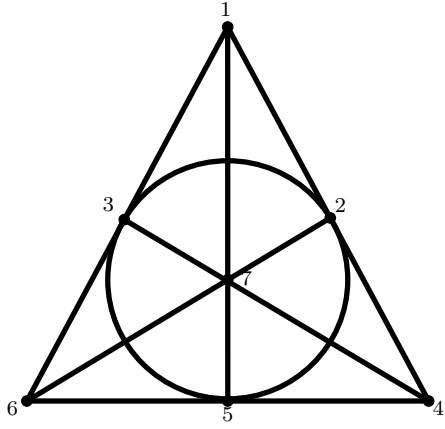
\begin{figure}[h]   
				\begin{center}
				\begin{tikzpicture}[line cap=round,line join=round,>=triangle 45,x=2.9556153402699046cm,y=2.9347665626550787cm]
\clip(1.757916463839754,2.6222358917995665) rectangle (4.314656481695169,4.689668768306799);
\draw [line width=2.pt] (3.,4.5)-- (2.096944598379837,2.804482965175766);
\draw [line width=2.pt] (3.,4.5)-- (3.904066075689833,2.804482965175766);
\draw [line width=2.pt] (3.904066075689833,2.804482965175766)-- (2.096944598379837,2.804482965175766);
\draw [line width=2.pt] (3.,4.5)-- (3.000560050447094,2.804482965175766);
\draw [line width=2.pt] (2.096944598379837,2.804482965175766)-- (3.461511828457688,3.634464628234374);
\draw [line width=2.pt] (3.904066075689833,2.804482965175766)-- (2.534897765352633,3.626754836564147);
\draw [line width=2.pt] (3.0003785427883396,3.353985834009123) ellipse (1.5952446400212388cm and 1.5839918561126236cm);
\begin{scriptsize}
\draw [fill=black] (3.,4.5) circle (2.0pt);
\draw[color=black] (2.9922047483216785,4.581668543414629) node {$1$};
\draw [fill=black] (2.096944598379837,2.804482965175766) circle (2.0pt);
\draw[color=black] (2.031223155403609,2.76990966869294) node {$6$};
\draw [fill=black] (3.904066075689833,2.804482965175766) circle (2.0pt);
\draw[color=black] (3.94877816879517,2.76990966869294) node {$4$};
\draw [fill=black] (3.000560050447094,2.804482965175766) circle (2.0pt);
\draw[color=black] (3.001021093210835,2.74346063402547) node {$5$};
\draw [fill=black] (3.461511828457688,3.634464628234374) circle (2.0pt);
\draw[color=black] (3.50796092433734,3.691217709609809) node {$2$};
\draw [fill=black] (2.534897765352633,3.626754836564147) circle (2.0pt);
\draw[color=black] (2.463224054972282,3.6956258820543875) node {$3$};
\draw [fill=black] (3.0003785427883396,3.353985834009123) circle (2.0pt);
\draw[color=black] (3.0847763696578228,3.360604776266435) node {$7$};
\end{scriptsize}
\end{tikzpicture}
\caption{Fano Plane of order $7$}                    	
				\label{Fig:F7}
\end{center}
\end{figure}
Its multivariate generating polynomial is given by
\begin{align*}
	\text{gren}(F_7)=&p_1p_2p_3p_4p_5p_6p_7+p_1p_2p_3p_4p_5p_6q_7+p_1p_2p_3p_4p_5p_7q_6+p_1p_2p_3p_4p_5q_6q_7+\\
	&p_1p_2p_3p_4p_6p_7q_5+p_1p_2p_3p_4p_6q_5q_7+p_1p_2p_3p_4p_7q_5q_6+p_1p_2p_3p_4q_5q_6q_7+\\
	&p_1p_2p_3p_5p_6p_7q_4+p_1p_2p_3p_5p_6q_4q_7+p_1p_2p_3p_5p_7q_4q_6+p_1p_2p_3p_5q_4q_6q_7+\\
	&p_1p_2p_3p_6p_7q_4q_5+p_1p_2p_3p_6q_4q_5q_7+p_1p_2p_4p_5p_6p_7q_3+p_1p_2p_4p_5p_6q_3q_7+\\
	&p_1p_2p_4p_5p_7q_3q_6+p_1p_2p_4p_5q_3q_6q_7+p_1p_2p_4p_6p_7q_3q_5+p_1p_2p_4p_6q_3q_5q_7+\\
	&p_1p_2p_4p_7q_3q_5q_6+p_1p_2p_5p_6p_7q_3q_4+p_1p_2p_5p_7q_3q_4q_6+p_1p_2p_6p_7q_3q_4q_5+\\
	&p_1p_3p_4p_5p_6p_7q_2+p_1p_3p_4p_5p_6q_2q_7+p_1p_3p_4p_5p_7q_2q_6+p_1p_3p_4p_6p_7q_2q_5+\\
	&p_1p_3p_4p_6q_2q_5q_7+p_1p_3p_4p_7q_2q_5q_6+p_1p_3p_5p_6p_7q_2q_4+p_1p_3p_5p_6q_2q_4q_7+\\
	&p_1p_3p_5p_7q_2q_4q_6+p_1p_3p_6p_7q_2q_4q_5+p_1p_4p_5p_6p_7q_2q_3+p_1p_4p_5p_6q_2q_3q_7+\\
	&p_1p_4p_5p_7q_2q_3q_6+p_1p_5p_6p_7q_2q_3q_4+p_2p_3p_4p_5p_6p_7q_1+p_2p_3p_4p_5p_6q_1q_7+\\
	&p_2p_3p_4p_5p_7q_1q_6+p_2p_3p_4p_5q_1q_6q_7+p_2p_3p_4p_6p_7q_1q_5+p_2p_3p_4p_7q_1q_5q_6+\\
	&p_2p_3p_5p_6p_7q_1q_4+p_2p_3p_5p_6q_1q_4q_7+p_2p_3p_5p_7q_1q_4q_6+p_2p_3p_6p_7q_1q_4q_5+\\
	&p_2p_4p_5p_6p_7q_1q_3+p_2p_4p_5p_6q_1q_3q_7+p_2p_4p_6p_7q_1q_3q_5+p_2p_5p_6p_7q_1q_3q_4+\\
	&p_3p_4p_5p_6p_7q_1q_2+p_3p_4p_5p_6q_1q_2q_7+p_3p_4p_5p_7q_1q_2q_6+p_3p_4p_6p_7q_1q_2q_5+\\ &p_4p_5p_6p_7q_1q_2q_3
\end{align*}
There are, we have found, many choices for $\mathbf{k}$-replacements that result in roots outside of the unit disk.  For example, consider $\mathbf{k} = (1,4,4,4,5,4,5)$.  Then we obtain that the reliability of the matroid formed is 
\begin{eqnarray*}
& & (1-q)^4(13q^{23}+46q^{22}+106q^{21}+200q^{20}+311q^{19}+418q^{18}+\\
& & 504q^{17}+552q^{16}+557q^{15}+528q^{14}+476q^{13}+411q^{12}+\\
& & 343q^{11}+278q^{10}+218q^9+165q^8+120q^7+84q^6+\\
& & 56q^5+35q^4+20q^3+10q^2+4q+1)
\end{eqnarray*}
which has maximum modulus approximately $1.00184754528486$. 
We can iterate through all possible combinations of the $k_i$ in the proof above between $1$ and $5$, which yields many roots that are outside of the unit disk.  We have included some notable roots in the table below:
\begin{center}
\begin{tabular}{| c | c |}
 				\hline			
  				\textbf{k} & Maximum Modulus \\ \hline \hline
				$\{1, 4, 4, 4, 5, 4, 5\}$ & $1.0018475452848614$ \\
     $   \{2, 2, 5, 2, 5, 5, 5\} $&$1.003722670361891$ \\
        $\{3, 3, 3, 5, 5, 5, 3\}$ & $1.001595847748084$ \\
       $\{3, 3, 5, 3, 5, 5, 5\} $& $1.0070841870536522$ \\
       $\{4, 4, 4, 5, 5, 5, 4\} $& $1.0076584896344196$ \\
       $\{4, 4, 5, 4, 5, 5, 5\} $& $1.0087285165185493$ \\\hline

  			\end{tabular}
\end{center}

%		An example of constructing a matroid whose link complex has a root outside of the disk is to take any matroid $\M$ which has a root outside of the disk, and apply a $2$-path replacement on any vertex $v$.  It is easy to see that $\lnk_{\text{PR}(\M,v,2)}(v) = \M$.
		
		\section{Open Problems}
There are still many questions open on reliability roots for complexes in general, and matroids in particular. We have seen that the reliability roots of all matroids of small rank (rank at most $3$) are in the unit disk.

\begin{prob}
What is the smallest rank (or order) of a matroid with a reliability root outside the unit disk?
\end{prob}

The cographic matroids corresponding to the Royle-Sokal graph has rank $13$ and order $16$, and is the smallest one we know.

%\vspace{0.2in}

It would be of interest to find other constructions (other than those raised in section 4) that produce reliability roots outside the unit disk, both for matroids and other complexes.
The most salient open question is how large in moduli can a reliability root of a matroid be?

\begin{prob}
Are the reliability roots of matroids bounded?
\end{prob}

It seems likely that they are, perhaps even by $2$, but of course there may be some extremal families that have roots far outside the disk.

%\vspace{0.2in}

We have seen that the paving matroids of rank $4$ (and smaller rank) have roots inside the unit disk. Almost all of the coefficients of the H-polynomial of paving matroids are completely described -- only the leading coefficient varies from one paving matroid of order $m$ and rank $r$.

\begin{prob}
Are the reliability roots of paving matroids always in the unit disk?
\end{prob}
%\vspace{0.2in}

Paving matroids are widely believed to dominate all matroids -- it has been conjectured that almost all matroids of order $m$ are paving matroids. We have found throughout our work that the reliability roots of matroids are rarely outside the disk:

\begin{prob}
Do almost all matroids have their reliability roots inside the unit disk?
\end{prob}

The difficulty on approaching this question is that, unlike graphs (which have the Erd\"{o}s--R\'{e}nyi model), there is no generative probabilistic model for matroids (it is this issue that has hampered attempts to show that almost all matroids are paving). 

To conclude, we will show that while we do not know whether almost all matroids have their reliability roots in the unit disk, for almost all purely $d$-dimensional complexes, the reliability roots are all in the unit disk. As the purely $d$-dimensional complexes on $[m] = \{1,2,\ldots,m\}$ are in a $1-1$ correspondence with the collections of $d$-subsets of $[m]$, we can form a generative probabilistic model $\mathcal{PD}_{n,1/2}$ for purely $d$-dimensional complexes on $[m]$ by randomly choosing facets, that is, each $d$-subset of $[m]$ independently with probability $1/2$ (under this model, each purely $d$-dimensional complex on $[n]$ occurs with equal probability); we extend the model (as is done for graphs) to $\mathcal{PD}_{m,p}$ by fixing any $p \in (0,1)$ and choosing each $d$-subset of $[m]$ independently with probability $p$. 

\begin{theorem}
For fixed $d$ and $p \in (0,1)$, the reliability roots of almost all purely $d$-dimensional complexes in $\mathcal{PD}_{m,p}$ lie inside the unit disk.
\end{theorem}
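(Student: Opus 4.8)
The plan is to show that a complex drawn from $\mathcal{PD}_{m,p}$ is, with probability tending to $1$ as $m\to\infty$, essentially the full $d$-skeleton of the $m$-simplex carrying one very large but innocuous leading $H$-coefficient, so that its $H$-polynomial has positive coefficients all dominated by the top one; a classical root-location estimate then closes things out.

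First I would pin down the low-dimensional skeleton. A fixed $i$-subset $S$ with $i<d$ fails to be a face of the random complex exactly when none of the $\binom{m-i}{d-i}$ many $d$-subsets containing $S$ is selected, an event of probability $(1-p)^{\binom{m-i}{d-i}}$; since $d-i\ge 1$, the exponent grows (polynomially) with $m$, so this probability decays faster than any fixed power of $m$. A union bound over the polynomially many subsets $S$ with $|S|<d$ then shows that a.a.s. $F_i=\binom{m}{i}$ for every $i<d$ (so, in particular, the complex is genuinely purely $d$-dimensional). Separately, $F_d$ is a $\mathrm{Binomial}\!\bigl(\binom{m}{d},p\bigr)$ random variable with mean $p\binom{m}{d}\to\infty$, so a Chernoff lower-tail bound gives $F_d\ge\tfrac{p}{2}\binom{m}{d}$ a.a.s. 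Write $E_m$ for the intersection of these two events; then $\Pr(E_m)\to 1$, and it suffices to prove that for all sufficiently large $m$, every reliability root of every complex in $E_m$ lies in the unit disk.

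On $E_m$ the $H$-vector is essentially forced. For $k<d$ the number $H_k=\sum_{i=0}^{k}F_i(-1)^{k-i}\binom{d-i}{k-i}$ involves only $F_0,\dots,F_{d-1}$, so on $E_m$ it equals the corresponding $H$-number of the uniform matroid $U_{m,d}$, namely $\binom{m-d+k-1}{k}$; hence each such $H_k$ is positive, nondecreasing in $k$, and of size $O(m^{d-1})$. For the top coefficient, $H_d=\sum_{i=0}^{d}(-1)^{d-i}F_i$, which on $E_m$ equals $F_d-\binom{m-1}{d-1}$ and hence is at least $\tfrac{p}{2}\binom{m}{d}-\binom{m-1}{d-1}$, a quantity of size $\Theta(m^d)$. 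Consequently, on $E_m$ and for all large $m$, $0<\sum_{k<d}H_k<H_d$, so that for every $q$ with $|q|\ge1$,
\[
|h_{\mathcal{C}}(q)|\ \ge\ |H_d|\,|q|^d-\sum_{k<d}H_k|q|^k\ \ge\ |q|^{d-1}\Bigl(H_d-\sum_{k<d}H_k\Bigr)\ >\ 0 ,
\]
and therefore every root of $h_{\mathcal{C}}(q)$ — and hence of $\Rel{\mathcal{C}}=(1-q)^{m-d}h_{\mathcal{C}}(q)$ — lies in the open unit disk. Since $\Pr(E_m)\to1$, the theorem follows. (One can equally finish via the Enestr\"om--Kakeya theorem quoted earlier, since on $E_m$ the coefficient sequence $H_0,H_1,\dots,H_d$ is, for large $m$, positive and nondecreasing.)

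The one point that really needs care is that a complex from $\mathcal{PD}_{m,p}$ is in general neither a matroid nor even shellable, so nonnegativity of the $H$-vector — let alone the dominance of its top entry — is not automatic; it has to be manufactured from the concentration of $F_d$, and in particular it is the lower-tail Chernoff estimate, not merely the expectation of $F_d$, that does the work. Everything else is driven by the crude fact that, for fixed $d$, $\binom{m}{d}$ outgrows every $\binom{m}{i}$ with $i<d$.
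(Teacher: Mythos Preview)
Your proof is correct and follows essentially the same route as the paper: show that a.a.s.\ the $(d-1)$-skeleton is full (union bound) and $F_d$ is at least a constant fraction of $p\binom{m}{d}$ (Chernoff lower tail), deduce that the $H$-vector agrees with that of $U_{m,d}$ below the top and that $H_d$ dominates, then conclude via Enestr\"om--Kakeya (you give an equivalent direct triangle-inequality bound). The only slip is the last clause: the roots of $h_{\mathcal{C}}$ lie in the open disk, but $\Rel{\mathcal{C}}=(1-q)^{m-d}h_{\mathcal{C}}(q)$ also has the root $q=1$, so its roots lie in the \emph{closed} unit disk, which is all the theorem claims.
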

\begin{proof}
Clearly we can assume $d \geq 2$ (as all complexes of dimension at most $1$ have their reliability roots in the unit disk). Let $\varepsilon > 0 $. Consider the following events:
\begin{itemize}
\item $E_{1}$ is the event that there are no loops.
\item $E_{2}$ is the event that every $d-1$ subset of $[m]$ is a subset of a facet.
\item $E_{3}$ is the event that the number of facets is greater than $(1-\varepsilon)p{m \choose d}$.
\end{itemize}
Now 
\begin{eqnarray}
\mbox{Prob}(\overline{E_{1}}) & \leq & m(1-p)^{{n-1} \choose {d-1}} = o(1), \mbox{ and }\label{prob1}\\
\mbox{Prob}(\overline{E_{2}}) & \leq & {m \choose {d-1}}(1-p)^{m-(d-1)} = o(1).\label{prob2}
\end{eqnarray}
Moreover, the number of facets has a binomial distribution with probability $p$ and $M = {m \choose d}$ (and hence mean $pM$). The well-known Chernoff lower tail bounds \cite{chernoff} implies that for independent random variables $X_{1},\ldots,X_{M}$, with each $X_{i}$ always lying in $[0,1]$, if we set $X = \sum X_{i}$ and $\mu = E(X)$, then 
\[ \mbox{Prob}(X \leq (1-\varepsilon)\mu) \leq e^{-\mu \varepsilon^2 /2}.\]
It immediately follows that
\begin{eqnarray}
\mbox{Prob}(\overline{E_{3}}) & \leq & e^{-pM \varepsilon^2 /2} = o(1).\label{prob3}
\end{eqnarray}

From \reff{prob1}, \reff{prob2} and \reff{prob3} we see that 
\[ \mbox{Prob}(\overline{E_{1} \cap E_{2} \cap E_{3}}) = \mbox{Prob}(\overline{E_{1}} \cup \overline{E_{2}} \cup \overline{E_{3}})  = o(1),\]
so that 
\[ \lim_{m \rightarrow \infty} \mbox{Prob}(E_{1} \cap E_{2} \cap E_{3}) = 1.\]
Consider any purely $d$-dimensional complex $\mathcal{C}$ that lies in $E_{1} \cap E_{2} \cap E_{3}$. From $E_{1}$ and $E_{2}$, we see that the $k$-skeletons are full for all $k< d$, so that $F_{i} = {m \choose i}$ for $i < d$, and $E_{3}$ implies that $F_{d} > (1-\varepsilon)p{m \choose d}$. 

%Stanley \cite{stanley} proved necessary and sufficient conditions for a vector $\langle H_{0},H_{1},\ldots,H_{d}\rangle$ to be the H-vector of a shellable complex, as follows. For positive integers $r$ and $k$we can write $r$ uniquely as
%\[ r = {a_{k} \choose k} + {a_{k-1} \choose {k-1}} + \cdots + {a_{j} \choose j},\]
%where the $a_{i}$ are positive integers with the property that $a_{k} > a_{k-1} > \cdots > a_{j} \geq j$; the vector $(a_{k},a_{k-1} \ldots,a_{j})$ is called the \emph{$k$-canonical form} of $r$. (see \cite[pg. 57]{colbook} for details). Given the $k$-canonical form $(a_{k},a_{k-1} \ldots,a_{j})$ of $r$ and $i > k$,  $(i,k)$-th upper pseudopower of $r$ is \[ r^{<i/k>} =  {{a_{k}+i-k} \choose i} + {{a_{k-1}+i-k} \choose {i-1}} + \cdots + {{a_{j}+i-k} \choose j+i-k},\]
%(we note that in the above, the binomial coefficient ${{-1} \choose 0}$ is defined to be $1$, not $0$).
%$\langle H_{0},H_{1},\ldots,H_{d}$ is a H-vector of a shellable complex iff for each $i \in \{1,2,\ldots,d-1\}$, 
%\[ 0 \leq H_{i+1} \leq H_{i}^{<i+1/i>}.\]

It is not hard to verify (see \cite[Proposition 6.3]{W1}) that the H-vector of the uniform matroid $U(m,d)$ is $\langle {{m-d-1} \choose {0}}, {{m-d} \choose {1}},\ldots, {{m-d+i-1} \choose {i}},\ldots, {{m-1} \choose {d}}\rangle$. The uniform matroid $U(m,d)$ and pure complex $\mathcal{C}$ have the same F-vector vector except for possibly $F_{d}$, and $H_i$ of an H-vector only depends on $F_{j}$ for $j \leq i$, and so we conclude that for $\mathcal{C}$, 
\[ H_{i} = {{m-d+i-1} \choose {i}}\]
for $i = 0,1,\ldots,d-1$.
Now from a simple binomial identity, we find that 
\[ \sum_{i=0}^{d-1} {{m-d+i-1} \choose {i}} = {{m-1} \choose {d-1}},\]
and so for $\mathcal{C}$, from \reff{Fd} we find that 
\[H_{d} = F_{d} - \sum_{i=0}^{d-1} {{m-d+i-1} \choose {i}} > (1-\varepsilon)p{m \choose d} - {{m-1} \choose {d-1}}.\]

It is trivial to check that 
\[ H_{i} = {{m-d+i-1} \choose {i}} \leq H_{i+1} = {{m-d+i} \choose {i+1}}\]
provided $m \geq d+1$ (which we can assume, as we are interested in the limit as $m \rightarrow \infty$). Moreover, 
\[ H_{d} = {m \choose d} - {{m-1} \choose {d-1}} \geq H_{d-1} = {{m-2} \choose {d-1}}\]
provided 
\[ (1-\varepsilon)pm(m-1) \geq d(m+d) + (m-1)d,\]
which clearly holds if $m$ is sufficiently large (as $d$ and $p$ are fixed).
Thus the (positive) coefficients of the $h$-polynomial of $\mathcal{C}$ are nondecreasing, so we conclude by the Enestr\"om-Kakeya Theorem mentioned earlier that all of roots of the polynomial have modulus at most $1$, and hence the same is true of the roots of the reliability polynomial of $\mathcal{C}$. It follows that for almost all complexes in $\mathcal{PD}_{m,p}$, their reliability roots lie inside the unit disk.
\end{proof}

Are almost all purely $d$-dimensional complexes on $[m]$ shellable? It is true for $d = 2$ (as almost all graphs are connected), but it seems unlikely if $d \geq 3$. But if so, then almost all shellable complexes would have their roots in the unit disk.  As well, while every matroid is a pure complex, we do not know whether the H-vector of almost all matroids is nondecreasing (which would be sufficient to proving that the roots of almost matroids are in the unit disk).
	
\vspace{0.25in}
\noindent {\bf Acknowledgements}
\vspace{0.1in}

\noindent J.I. Brown acknowledges support from NSERC (grant application RGPIN 170450-2013).  

\vspace{0.25in}
\noindent {\bf References}
\vspace{0.1in}

\end{document}